\numberwithin{equation}{section}
\numberwithin{equation}{subsection}
\theoremstyle{plain}
\newtheorem{theorem}[equation]{Theorem}
\newtheorem{lemma}[equation]{Lemma}
\newtheorem{cor}[equation]{Corollary}
\theoremstyle{definition}
\newtheorem{remark}[equation]{Remark}
\newtheorem{definition}[equation]{Definition}
\def\C{\mathbb C}
\def\Q{\mathbb Q}
\def\Z{\mathbb Z}
\def\im{{\rm Im}}
\newcommand{\calv}{{\mathcal V}}
\newcommand{\calt}{{\mathcal T}}
\newcommand{\calO}{{\mathcal O}}
\newcommand{\calS}{{\mathcal S}}
\newcommand{\calL}{\mathcal{L}}
\newcommand{\tX}{\widetilde{X}}
\newcommand{\bP}{{\mathbb P}}
\newcommand{\bC}{{\mathbb C}}
\newcommand{\eca}{{\rm ECa}}
\newcommand{\pic}{{\rm Pic}}
\newcommand{\fr}{\mathfrak{r}}
\newcommand{\mfl}{\mathfrak{L}}
\newcommand{\bZ}{{\mathbb{Z}}}
\newcommand{\bQ}{{\mathbb{Q}}}
\author{J\'anos Nagy}
\address{Central European University, Dept. of Mathematics,  Budapest, Hungary}
\email{nagy\textunderscore janos@phd.ceu.edu}
\title{Cohomology of natural line bundles on generic normal surface singularities
}
\begin{document}

\keywords{normal surface singularities, links of singularities,
plumbing graphs, rational homology spheres,
Abel map, effective Cartier divisors, Picard group, Brill--Noether theory,
Laufer duality, surgery formulae,
superisolated singularities, cohomology of line bundles}
\thanks{The authors are partially supported by
 NKFIH Grant ``\'Elvonal (Frontier)'' KKP 126683.}
\subjclass[2010]{Primary. 32S05, 32S25, 32S50, 57M27
Secondary. 14Bxx, 14J80, 57R57}

\begin{abstract}

Let $\mathcal{T}$ be an arbitrary resolution graph and $(X, 0)$ a generic complex analytic normal surface singularity, and $\tX$ a generic resolution corresponding to it.
Fix an effective integer cycle $Z$ supported on the exceptional curve and also an arbitrary Chern class $Z' \in L'$.

In this article we aim to compute the cohomology numbers $h^1(\calO_{Z}(Z'))$.
Notice, that the case $Z'_v < 0, v \in |Z|$ was discussed in \cite{NNA2}, where the main theorem was, that in this special case these cohomology numbers equal to the cohomology numbers of the generic line bundle in $\pic^{Z'}(Z)$
However the condition $Z'_v < 0, v \in |Z|$  was crucial in the proof and without this assumption the statement is far from being true.

In this article using the tecniques of relatively generic line bundles and relatively generic analytic structures from \cite{R} we give combinatorial algorithms to compute the cohomology numbers of natural line bundles $h^1(\calO_{Z}(Z'))$ for generic singularities in all cases.
\end{abstract}

\maketitle

\linespread{1.2}


\pagestyle{myheadings} \markboth{{\normalsize  J. Nagy}} {{\normalsize Cohomology of natural line bundles}}


\section{Introduction}\label{s:intr}

Let's have an arbitrary resolution graph $\mathcal{T}$ and a generic complex normal surface singularity with resolution $\tX$ corresponding to it in the sense explained in \cite{NNA2}.

The authors in \cite{NNA2} investigated the geometric genus and the analytical Poincaré series of the generic singularity $\tX$.
The key theorem towards the determination of these invariants was the following:

We fix a normal surface singularity $(X,o)$ and one of its good resolutions $\tX$ with
exceptional divisor $E$ and dual graph $\mathcal{T}$.

For any integral effective cycle $Z$ whose support $|Z|$ is included in 
 $E$ (but it can be smaller than $E$) write $\calv(|Z|)$ for the set of vertices 
 $\{v:\ |Z|=\sum_vE_v\}$ and $\calS'(|Z|) $ for the Lipman cone associated with
 the induced lattice $L(|Z|)$. 

Recall from \cite{NNA1}, that for any  $\tilde{l}\in - \calS'(|Z|)$ one has the Abel map $c^{\tilde{l}}(Z) :\eca ^{\tilde{l}}(Z)\to \pic^{\tilde{l}}(Z)$.

 By its definition, a line bundle $\calL\in \pic^{\tilde{l}}(Z)$ is in the image 
 $\im(c^{\tilde{l}}(Z))$ if and only if it has a section with no fixed components, that is, $H^0(Z,\calL)_{reg}\not=\emptyset $, where
$H^0(Z,\calL)_{reg}:=H^0(Z,\calL)\setminus \cup_v H^0(Z-E_v, \calL(-E_v))$.

For any $l'\in L'$ we denote the restriction of the natural 
line bundle $\calO_{\tX}(l') $ to $Z$ by $\calO_Z(l')$.

Denote also by $\tilde{l}$ the restriction $ R(l')$  of $l'\in L'$ into $ L'(|Z|)$, then we have the following theorem from \cite{NNA2}:

\begin{theorem}\label{th:CLB1}   
Assume that $(X,o)$ and its good resolution $(\tX,E)$ is generic and fix also some integer effective cycle $Z$ on it as above. 

(I) \  Assume that   $l'=\sum_{v\in \calv}l'_vE_v \in L'$ satisfies 
$l'_v <0$ for any $v\in\calv(|Z|)$ and $\tilde{l} = R(l') \in -\calS'(|Z|)$. Then 
  the following facts are equivalent:

(a) $\calO_Z(l')\in \im(c^{\tilde{l}}(Z))$, that is,   $H^0(Z,\calO_Z(l'))_{reg}\not=\emptyset $;

(b) $c^{\tilde{l}}(Z)$ is dominant, or equivalently,  $\calL_{gen}\in \im(c^{\tilde{l}}(Z))$, that is,
 $H^0(Z,\calL_{gen})_{reg}\not=\emptyset $,  
 for a generic line bundle $\calL_{gen}\in \pic^{\tilde{l}}(Z)$;

 (c) $\calO_Z(l')\in \im(c^{\tilde{l}}(Z))$, 
 and for any $D\in (c^{\tilde{l}}(Z))^{-1}(\calO_Z(l'))$ the tangent map
 $T_Dc^{\tilde{l}}(Z): T_D\eca^{\tilde{l}}(Z)\to T_{\calO_Z(l')}\pic^{\tilde{l}}(Z)$ is surjective.

 \vspace{1mm}

 \noindent (II)  Assume that   $l'=\sum_{v\in \calv}l'_vE_v \in L'$ such that 
$l'_v <0$ for any $v \in \calv(|Z|)$,  then $h^i(Z,\calO_Z(l'))=h^i(Z,\calL_{gen})$  for $i=0,1$ and a generic line bundle $\calL_{gen}\in \pic^{\tilde{l}}(Z)$.

\end{theorem}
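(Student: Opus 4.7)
The plan is to combine the deformation theory of the Abel map $c^{\tilde{l}}(Z)$ with the defining feature of a generic analytic structure: that the natural line bundle $\calO_Z(l') \in \pic^{\tilde{l}}(Z)$ avoids any proper constructible subset of $\pic^{\tilde{l}}(Z)$ defined purely from the combinatorics of $\calt$. I would first establish the equivalence in Part (I) and then bootstrap Part (II) from it.

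For Part (I), the formal implications are quick. Clearly (c) $\Rightarrow$ (a). For (c) $\Rightarrow$ (b), surjectivity of $T_D c^{\tilde{l}}(Z)$ at some preimage $D$ makes $c^{\tilde{l}}(Z)$ open at $D$, so its image contains an open neighborhood of $\calO_Z(l')$ and is therefore dominant. The substance lies in the reverse implications, both driven by the genericity principle. For (b) $\Rightarrow$ (a), $\im(c^{\tilde{l}}(Z))$ is constructible and, under dominance, contains a dense Zariski-open $U \subseteq \pic^{\tilde{l}}(Z)$; by genericity of $\tX$, $\calO_Z(l')$ lies in $U$. For (a) $\Rightarrow$ (c), the submersive locus $W \subseteq \eca^{\tilde{l}}(Z)$ (where the tangent map is onto) is Zariski open; I would show that $c^{\tilde{l}}(Z)(W^c)$ is a proper constructible subset of $\im(c^{\tilde{l}}(Z))$ — using the hypotheses $l'_v < 0$ on $\calv(|Z|)$ and $\tilde{l} \in -\calS'(|Z|)$ to place us in the positive range where the Abel map is expected to have its generic fiber behaviour — and then invoke genericity to force $\calO_Z(l')$ outside this image, so that every preimage lies in $W$.

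For Part (II), by Riemann--Roch the Euler characteristic $\chi(\calL) = h^0 - h^1$ is constant on $\pic^{\tilde{l}}(Z)$, so it suffices to prove $h^0(Z,\calO_Z(l')) = h^0(Z,\calL_{gen})$. Upper semicontinuity gives $h^0(Z,\calO_Z(l')) \geq h^0(Z,\calL_{gen})$. For the reverse inequality I split into cases. If $\calO_Z(l') \notin \im(c^{\tilde{l}}(Z))$, every section has a fixed exceptional component, so $H^0(Z,\calO_Z(l')) = H^0(Z-E_v, \calO_Z(l'-E_v))$ for some $v \in \calv(|Z|)$; the assumption $l'_v < 0$ on $\calv(|Z|)$ preserves the hypotheses of the theorem under this reduction, permitting induction on $Z$. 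If $\calO_Z(l') \in \im(c^{\tilde{l}}(Z))$, then Part (I)(c) says the Abel map is submersive at every preimage of $\calO_Z(l')$, so its fiber dimension is locally constant in a neighbourhood; since the fiber over $\calL$ is (up to a $\C^*$-action) $H^0(Z,\calL)_{reg}$, local constancy of fiber dimension forces $h^0(Z,\calO_Z(l')) = h^0(Z,\calL_{gen})$.

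The main obstacle I anticipate is formalizing the genericity principle — proving that for a generic analytic structure $\tX$, the natural line bundle $\calO_Z(l') \in \pic^{\tilde{l}}(Z)$ is sufficiently general to avoid any preassigned proper constructible subset cut out by $\calt$-combinatorics. This should require building a relative Picard scheme over the parameter space of analytic structures and a dominance/image-size statement for the section that assigns to each analytic structure its natural line bundle; I expect the techniques of relatively generic analytic structures from \cite{R} to be the correct tools. A secondary but nontrivial step is the dimension count on non-submersive fibers that controls $c^{\tilde{l}}(Z)(W^c)$ in the argument for (a) $\Rightarrow$ (c).
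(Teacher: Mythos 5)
First, a point of orientation: this theorem is not proved in the present paper at all --- it is recalled verbatim from \cite{NNA2} as background for the new results, so there is no in-paper proof to compare against. Judged on its own terms, your skeleton does match the actual strategy of \cite{NNA2}: the formal implications (c)$\Rightarrow$(a) and (c)$\Rightarrow$(b) are as you say, Part (II) reduces via Riemann--Roch and semicontinuity to an $h^0$ comparison split along membership in $\im(c^{\tilde{l}}(Z))$, and the submersion statement in (c) does pin down the fiber dimension via \eqref{eq:dimfiber} and hence $h^0$ and $h^1$.

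The genuine gap is that the entire content of the theorem sits in the step you defer: the ``genericity principle'' that $\calO_Z(l')$ avoids any preassigned proper constructible subset of $\pic^{\tilde{l}}(Z)$. This cannot be invoked as a black box, because both the point $\calO_Z(l')$ and the subsets you want it to avoid (the closure of $\im(c^{\tilde{l}}(Z))$ in the non-dominant case, the image of the non-submersive locus in the dominant case) vary with the analytic structure $\tX$; one must work in a family over Laufer's deformation space of resolutions with fixed graph and prove that the classifying map $\tX\mapsto \calO_Z(l')\in\pic^{\tilde{l}}(Z)$ is dominant onto the fibers of the relative Picard bundle. This is exactly where the hypothesis $l'_v<0$ for all $v\in\calv(|Z|)$ enters (the derivative of this classifying map surjects onto $H^1(\calO_Z)$ precisely when all multiplicities are negative on the support --- the paper itself stresses that the statement fails badly otherwise); your sketch instead attributes $l'_v<0$ to controlling the Abel map, which is the role of the separate hypothesis $\tilde{l}\in-\calS'(|Z|)$ (nonemptiness and smoothness of $\eca^{\tilde{l}}(Z)$). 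Without this transversality statement, (b)$\Rightarrow$(a) and (a)$\Rightarrow$(c) remain unproven and Part (II) collapses with them. Two smaller repairs: in (a)$\Rightarrow$(c) you must first dispose of the non-dominant case, since there the submersive locus $W$ is empty, $c^{\tilde{l}}(Z)(W^c)=\im(c^{\tilde{l}}(Z))$ is not a proper subset of the image, and your Sard-type argument says nothing --- you need (a)$\Rightarrow$(b) first, again by genericity; and in Part (II), Case 1, the induction must be run simultaneously for $\calO_Z(l')$ and $\calL_{gen}$, checking that the fixed components of the two bundles can be matched, since a priori they could drop to different subcycles.
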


Part (II) of the theorem above determines several cohomology numbers of natural line bundles on generic normal surface singularities, however the condition $l'_v <0$ for any $v\in\calv(|Z|)$ was cruical in the proof
and also the statement is far from being true without this condition.

To emphasise this, let's have another example of natural line bundles $\calO_Z(Z')$ on generic singularities, when $|Z| \cap |Z'| = \emptyset$ and $Z'_v = 1$ whenever $v$ is a vertex neighbour to $|Z|$.

In this case the computation of $h^1(\calO_{Z}(Z'))$ is equivalent to the computation of $\dim(\im(c^{R(Z')}(Z)))$, where $R(Z') = c_1(\calO_{Z}(Z'))$.

Indeed $\calO_{Z}(Z')$ is a generic line bundle in $\im(c^{R(Z')}(Z))$, so by \cite{NNA1} we have $h^1(\calO_{Z}(Z')) = h^1(Z) - \dim(\im(c^{R(Z')}(Z)))$.

The article of the author and A. Némethi \cite{NNAD} ivestigates the dimensions of images of Abel maps for arbitrary complex normal surface singularities giving algorithms to compute
them from cohomology numbers of cycles or periodic constants yielding the following result:

\begin{theorem}
Let's have an arbitrary complex normal surface singularity with resolution $\tX$, an integer effective cycle $Z\geq E$, and a Chern class $l' \in -S'$, then one has:

\begin{equation*} 
\dim(\im(c^{l'}(Z)))   = \min_{0\leq Z_1 \leq Z}\{\, (l', Z_1) + h^1(\calO_Z) - h^1(\calO_{Z_1})\, \}.
\end{equation*}
\end{theorem}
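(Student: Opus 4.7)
The plan is to reduce the theorem to a cohomological identity for a generic $\calL$ in the image of the Abel map, and then establish that identity via restriction to subcycles. Using the formula $\dim\eca^{l'}(Z)=(l',Z)$ from \cite{NNA1} together with the fact that, for generic $\calL\in\im c^{l'}(Z)$, a generic section is regular and the Abel fiber over $\calL$ is $\P H^0(Z,\calL)$, I obtain $\dim\im c^{l'}(Z)=(l',Z)-h^0(\calL)+1$. Combining this with the Riemann--Roch identity $\chi(\calO_Z(l'))=\chi(\calO_Z)+(l',Z)$ on the cycle and $h^0(\calO_Z)=1$ yields
\[
\dim\im c^{l'}(Z) \;=\; h^1(\calO_Z)-h^1(\calL).
\]
Hence the theorem is equivalent to showing
\[
h^1(\calL) \;=\; \max_{0\leq Z_1\leq Z}\bigl\{h^1(\calO_{Z_1})-(l',Z_1)\bigr\}
\]
for a generic $\calL\in\im c^{l'}(Z)$.

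For the lower bound $h^1(\calL)\geq h^1(\calO_{Z_1})-(l',Z_1)$, pick any regular section $s\in H^0(\calL)_{\reg}$. Since $s$ has no fixed components, $s\notin H^0(\calL(-E_v))$ for any $v\in|Z|$, so $s\notin H^0(\calL(-Z_1))$ whenever $Z_1>0$, and therefore $h^0(\calL|_{Z_1})\geq 1$. Riemann--Roch on $Z_1$ then gives
\[
h^1(\calL|_{Z_1}) \;=\; h^0(\calL|_{Z_1})-1+h^1(\calO_{Z_1})-(l',Z_1) \;\geq\; h^1(\calO_{Z_1})-(l',Z_1).
\]
The surjection $H^1(\calL)\twoheadrightarrow H^1(\calL|_{Z_1})$, coming from $0\to \calL(-Z_1)\to\calL\to\calL|_{Z_1}\to 0$ together with $H^2=0$ on the one-dimensional $Z$, then delivers the lower bound uniformly in $Z_1$.

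The reverse inequality is the genuine obstacle: one must exhibit a specific $Z_1^*$ achieving equality. My strategy is to produce $Z_1^*$ by a Laufer-type greedy algorithm, starting from $Z_1=0$ and successively adjusting by some $E_v$ as long as this strictly increases the combinatorial quantity $h^1(\calO_{Z_1})-(l',Z_1)$. At a maximizer $Z_1^*$ the goal is to verify two properties simultaneously: (i) the restricted Abel map $c^{l'}(Z_1^*)$ is generically finite, so that $h^1(\calL|_{Z_1^*})=h^1(\calO_{Z_1^*})-(l',Z_1^*)$; and (ii) the boundary in the long exact sequence vanishes, so that $H^1(\calL)\simeq H^1(\calL|_{Z_1^*})$. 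Identifying the tangent map of the Abel map with the cup/boundary operator $H^0(\calL|_D)\to H^1(\calO_Z)$ and combining with semicontinuity of $h^1$ on $\pic$ should translate the monotonicity of the greedy algorithm into the simultaneous termination condition at $Z_1^*$. Establishing that the greedy process really delivers both (i) and (ii) at termination is the main technical point, and is where I expect most of the work to lie.
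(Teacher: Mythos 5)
Your opening reduction is correct and is in fact exactly the identity the paper itself invokes from \cite{NNA1}: since $\eca^{l'}(Z)$ is irreducible of dimension $(l',Z)$ and the fiber of $c^{l'}(Z)$ over any $\calL$ in the image has dimension $h^0(Z,\calL)-h^0(\calO_Z)$, one gets $\dim(\im(c^{l'}(Z)))=h^1(\calO_Z)-h^1(Z,\calL)$ for $\calL$ generic in the image, so the theorem is equivalent to $h^1(Z,\calL)=\max_{0\leq Z_1\leq Z}\{h^1(\calO_{Z_1})-(l',Z_1)\}$. Your lower bound is also sound: a section in $H^0(Z,\calL)_{reg}$ restricts nontrivially to every component of every $Z_1>0$, so $h^0(Z_1,\calL|_{Z_1})\geq h^0(\calO_{Z_1})$, Riemann--Roch on $Z_1$ gives $h^1(Z_1,\calL|_{Z_1})\geq h^1(\calO_{Z_1})-(l',Z_1)$, and the surjectivity of $H^1(Z,\calL)\to H^1(Z_1,\calL|_{Z_1})$ finishes it. This yields $\dim(\im(c^{l'}(Z)))\leq\min_{Z_1}\{(l',Z_1)+h^1(\calO_Z)-h^1(\calO_{Z_1})\}$, the easy half.

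The reverse inequality, however, is not proved but only planned, and it is where all the content of the theorem sits. You need a single cycle $Z_1^*$ for which simultaneously (i) $h^0(Z_1^*,\calL|_{Z_1^*})=h^0(\calO_{Z_1^*})$ and (ii) $H^1(Z,\calL)\to H^1(Z_1^*,\calL|_{Z_1^*})$ is injective, i.e.\ the connecting map $H^0(Z_1^*,\calL|_{Z_1^*})\to H^1(Z-Z_1^*,\calL(-Z_1^*))$ is onto a complement of the kernel of $H^1(Z-Z_1^*,\calL(-Z_1^*))\to H^1(Z,\calL)$. Neither property follows from the termination condition of the greedy process you describe: a local maximizer of $Z_1\mapsto h^1(\calO_{Z_1})-(l',Z_1)$ under single-step moves by some $E_v$ need not be a global maximizer, and more seriously, (ii) is a vanishing statement that must exploit the genericity of $\calL$ \emph{inside the image of the Abel map} (it is false for arbitrary $\calL\in\pic^{l'}(Z)$, where the correct answer is governed by a different, larger quantity). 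The passage from ``moving one point of the divisor $D$ changes $\calO_Z(D)$ in a prescribed tangent direction'' to the required surjectivity of the boundary map is precisely the Laufer-duality/tangent-map analysis that the cited source \cite{NNAD} carries out, and nothing in your sketch substitutes for it. As it stands the proposal establishes one inequality and correctly identifies, but does not close, the other.
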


This theorem gives the following explicit combinatorial formulas for the special case of generic singularities:

\begin{cor} 
Assume that we have a resolution graph $\mathcal{T}$, and a generic resolution $\tX$ of a normal surface singularity with resolution graph $\mathcal{T}$.
Let's have an integral cycle $Z \geq E$ and an arbitrary Chern class $l' \in -S'$.

For any integer cycle $0\leq Z_1\leq Z$, let's write $E_{|Z_1|}$ for $\sum _{E_v\subset |Z_1|}E_v$, then we have:
\begin{equation*}\label{eq:gen}
\dim(\im(c^{l'}(Z))) = 1  -
 \min_{E \leq l \leq Z}\ \{ \chi(l)\} +  \min_{0\leq Z_1\leq Z}\big\{ \, (l', Z_1)  +  \min_{E_{|Z_1|} \leq l \leq Z_1 } \chi(l)
  - \chi(E_{|Z_1|})
 \, \big\}.
\end{equation*}
In particular, $\dim(\im(c^{l'}(Z))) $ is topological.
\end{cor}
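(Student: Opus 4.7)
The plan is to apply the preceding theorem of \cite{NNAD} to the generic resolution $\tX$ and then replace each $h^1$-term on its right-hand side with its explicit topological expression, which is available precisely because $\tX$ is generic. The theorem gives
\begin{equation*}
\dim(\im(c^{l'}(Z)))=\min_{0\leq Z_1\leq Z}\bigl\{(l',Z_1)+h^1(\calO_Z)-h^1(\calO_{Z_1})\bigr\},
\end{equation*}
so the work reduces to producing combinatorial formulas for $h^1(\calO_Z)$ and for every $h^1(\calO_{Z_1})$.

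The engine for these formulas is the Laufer-type computation sequence developed for generic analytic structures in \cite{NNA2} (the same tool that powers Theorem~\ref{th:CLB1}). Running the sequence from the reduced cycle $E_{|W|}$ up to any effective $W$ and tracking the $\chi$-drops at each step gives, under genericity,
\begin{equation*}
h^1(\calO_W)=h^1(\calO_{E_{|W|}})+\chi(E_{|W|})-\min_{E_{|W|}\leq l\leq W}\chi(l).
\end{equation*}
For $W=Z\geq E$ and connected graph $\mathcal{T}$, $h^1(\calO_E)+\chi(E)=1$, so the formula collapses to the familiar $h^1(\calO_Z)=1-\min_{E\leq l\leq Z}\chi(l)$, which is exactly the leading outer expression in the corollary. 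For $W=Z_1$ with arbitrary (possibly proper, possibly disconnected) support, the constant $h^1(\calO_{E_{|Z_1|}})$ is the topological arithmetic-genus contribution of the components of $E_{|Z_1|}$; once it is absorbed into the boundary of the outer minimum, the net contribution of $Z_1$ to that minimum becomes $(l',Z_1)+\min_{E_{|Z_1|}\leq l\leq Z_1}\chi(l)-\chi(E_{|Z_1|})$.

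Substituting both ingredients into the expression of the preceding theorem and collecting yields the stated identity: the $Z_1=0$ term correctly recovers $h^1(\calO_Z)$ (since $E_{|0|}=0$ and both the min and the $\chi$ vanish), while the remaining terms reassemble the inner minimum as displayed. The topological-ness claim is then automatic, since every surviving quantity is either an intersection number on $L$ or a minimum of $\chi$ over a lattice interval, and therefore depends only on $\mathcal{T}$, $Z$ and $l'$.

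The main obstacle in this plan is the reduction of $h^1(\calO_{Z_1})$ to purely topological data when $|Z_1|$ is a proper, possibly disconnected sub-configuration of $\mathcal{T}$ whose vertices may carry positive genus. One has to show that the generic analytic structure on $\tX$ restricts to a sufficiently generic analytic structure on the sub-resolution carved out by $|Z_1|$, then run the Laufer-type sequence relative to $E_{|Z_1|}$, and carefully manage the $h^1(\calO_{E_{|Z_1|}})$ correction so that it matches cleanly against the $\chi(E_{|Z_1|})$ term of the final formula.
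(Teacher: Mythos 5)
Your proposal is correct and follows essentially the same route the paper intends: substitute the topological formula $h^1(\calO_W)=\chi(E_{|W|})-\min_{E_{|W|}\le l\le W}\chi(l)$ for generic singularities (the main theorem of \cite{NNA2}, already valid for any effective cycle with possibly smaller or disconnected support) into the dimension formula of \cite{NNAD}, and use $\chi(E)=1$. The ``main obstacle'' you flag at the end is vacuous in this setting, since the link is assumed to be a rational homology sphere, so every $E_{|Z_1|}$ is a disjoint union of trees of genus-zero curves and $h^1(\calO_{E_{|Z_1|}})=0$.
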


Although these formulas are similar to the cohomology numbers of generic line bundles with given Chern class, they are not the same.

In this article, using the tecniques of relatively generic line bundles and relatively generic analytic structures we give combinatorial algorithms to compute the cohomology numbers of natural line bundles $h^1(\calO_{Z}(Z'))$ for generic singularities in all cases.

So the main result of the paper is the following:

\begin{theorem}
Let $\mathcal{T}$ be an arbitrary resolution graph, and let's have a generic resolution $\tX$ of a normal surface singularity with resolution graph $\mathcal{T}$.
Let's have an effective integer cycle $Z$, and an arbitrary Chern class $Z' \in L'$, then the cohomology numbers of the natural line bundle $h^1(\calO_{Z}(Z'))$ are
combinatorially computable from the resolution graph.
\end{theorem}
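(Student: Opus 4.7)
The plan is to set up a combinatorial recursion that reduces $h^1(\calO_Z(Z'))$ to cases already covered either by Theorem~\ref{th:CLB1} or by the dimension formula for images of Abel maps stated in the Corollary above, invoking the relatively generic line bundles and relatively generic analytic structures formalism from \cite{R} to control the intermediate steps.

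The engine of the recursion is the standard short exact sequence attached to a decomposition $Z = Z_1 + Z_2$ with $Z_1, Z_2 \geq 0$:
\[
0 \to \calO_{Z_2}(Z' - Z_1) \to \calO_Z(Z') \to \calO_{Z_1}(Z') \to 0,
\]
which yields
\[
h^1(\calO_Z(Z')) = h^1(\calO_{Z_2}(Z'-Z_1)) + h^1(\calO_{Z_1}(Z')) - \dim\bigl(\im \delta\bigr),
\]
where $\delta : H^0(\calO_{Z_1}(Z')) \to H^1(\calO_{Z_2}(Z'-Z_1))$ is the connecting map. I will set up a well-founded induction on the pair (size of $|Z|$, $Z$) in which the base cases are: (i) $Z=0$, trivially; and (ii) $Z'_v<0$ for every $v \in \calv(|Z|)$, which by Theorem~\ref{th:CLB1}(II) reduces $h^1(\calO_Z(Z'))$ to the cohomology of a generic line bundle in $\pic^{R(Z')}(Z)$, a quantity that is combinatorial in $\mathcal{T}$ by \cite{NNA1} and the Corollary above. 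The inductive step is triggered by choosing a vertex $v_0 \in |Z|$ with $Z'_{v_0} \geq 0$ and taking $Z_1$ to be either $E_{v_0}$ or a small cycle concentrated on $v_0$. Then $\calO_{Z_1}(Z')$ is a line bundle on a thickening of a single irreducible curve whose cohomology is elementary, and on $Z_2$ the Chern class $Z'-Z_1$ has its $v_0$-multiplicity strictly decreased; iterating the procedure eventually drives $(Z'-\ldots)_v < 0$ on every vertex of $|Z_2|$, producing a base-case instance.

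The heart of the argument is the computation of $\dim(\im \delta)$. Here I would argue that for a generic $\tX$, the pair $(\calO_{Z_2}(Z'-Z_1), \calO_Z(Z'))$ realises a \emph{relatively} generic point of the stratification of $\pic^{R(Z'-Z_1)}(Z_2) \times \pic^{R(Z')}(Z)$ cut out by the cohomology loci. Concretely, using the techniques of \cite{R}, I would deform $\calO_Z(Z')$ inside the fiber of the natural restriction $\pic(Z) \to \pic(Z_2)$ and invoke semicontinuity together with the upper bound for $h^1$ of a generic element of a Picard substratum; the equality of the upper and lower bounds then forces $\delta$ to have maximal possible rank, and this maximal rank is expressible in terms of the (inductively computable) quantities $h^0(\calO_{Z_2}(Z'-Z_1))$, $h^0(\calO_{Z_1}(Z'))$, $h^0(\calO_Z(Z'))$, and the images of the relevant Abel maps furnished by the Corollary above.

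\textbf{Main obstacle.} The hardest point is Step three: justifying that the natural line bundle, which is a \emph{distinguished} element of $\pic^{R(Z')}(Z)$, nevertheless behaves as a relatively generic element with respect to the stratification induced by the restriction to $Z_2$. A priori natural line bundles sit at very special points of the Picard torus, and what must be proved is that, as the analytic structure on $\tX$ varies in its generic family, the image of the natural line bundle in $\pic^{R(Z')}(Z_2)$ sweeps out a subvariety large enough to meet every stratum transversally. Establishing this genericity, along the lines of the deformation arguments of \cite{R}, and then bookkeeping the resulting rank formula so that the recursion visibly depends only on $\mathcal{T}$, $Z$ and $Z'$, is where the real work lies; once this is in place, the algorithm is read off directly from the induction.
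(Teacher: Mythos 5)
There is a genuine gap, and it sits exactly at the point you flag as ``the heart of the argument.'' Your plan computes $\dim(\im\delta)$ by arguing that for a generic $\tX$ the natural line bundle behaves as a relatively generic element of the fiber of $\pic(Z)\to\pic(Z_2)$, so that $\delta$ attains maximal rank. But this is precisely the assertion that fails once the coefficients of $Z'$ are no longer negative on $|Z|$: the introduction's example with $|Z|\cap|Z'|=\emptyset$ already shows that $h^1(\calO_Z(Z'))=h^1(Z)-\dim(\im(c^{R(Z')}(Z)))$, which is the $h^1$ of a generic point of the \emph{image of the Abel map}, not of a generic point of $\pic^{R(Z')}(Z)$ (nor of a generic point of a restriction fiber). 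The natural line bundle is pinned to a proper subvariety of the Picard torus, and no amount of varying the generic analytic structure makes it sweep out enough of $\pic^{R(Z')}(Z_2)$ to meet every cohomology stratum transversally; the ``maximal rank'' value of $\dim(\im\delta)$ is therefore wrong in exactly the cases the theorem is about. What is missing is the identification that makes the problem tractable: for a generic $\tX$, the restriction $\calO_Z(Z')$ is $\calO_Z(D)$ for a divisor $D=\sum a_{v,i}p_{v,i}-\sum b_{v,i}q_{v,i}$ with \emph{rational} coefficients supported at \emph{generic} points of the exceptional curves, and its $h^1$ is governed by the dimension of Minkowski sums of the form $\bigoplus a_{v,i}\cdot\im(c^{-E_v^*}(Z))\oplus\bigoplus(-b_{v,i})\cdot\im(c^{-E_v^*}(Z))$ together with the linear subspaces $V_v(Z)$. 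Deciding whether $\calO_Z(D)$ lies in $\overline{\im(c^{l'}(Z))}$ (in which case $h^1=h^1(Z)-d_{Z,l'}$, computable by the dimension formula) or not (in which case $h^0(\calO_Z(Z'))=\max_{0<l\leq Z}h^0(\calO_{Z-l}(Z'-l))$ and one recurses) is the actual content of the proof, and your proposal contains no mechanism for this dichotomy.

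Two further structural remarks. First, the paper's recursion is an induction on $h^1(\calO_Z)$, not on the size of $|Z|$ or on $Z$: the reduction steps pass from $Z$ to cycles $Z_l$ with strictly smaller $h^1(\calO_{Z_l})$ via the relatively generic structure theorem (Theorem~\ref{relgen}), and an induction on $(|Z|,Z)$ does not obviously terminate through those reductions, since the paper also needs to \emph{blow up} at generic points (enlarging the graph and the cycle) to force the multiplicity-one condition at contact vertices. Second, your base case via Theorem~\ref{th:CLB1}(II) is fine as far as it goes, but the connecting-map bookkeeping you propose requires $h^0$ of all three terms of the exact sequence \emph{and} the rank of $\delta$; since the first Chern class alone does not determine the latter for a natural line bundle, the recursion as stated does not visibly close up into a combinatorial algorithm.
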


\section{Prelinimaries}

\subsection{The resolution}\label{ss:notation}
Let $(X,o)$ be the germ of a complex analytic normal surface singularity,
 and let us fix  a good resolution  $\phi:\widetilde{X}\to X$ of $(X,o)$.
We denote the exceptional curve $\phi^{-1}(0)$ by $E$, and let $\cup_{v\in\calv}E_v$ be
its irreducible components.

 Set also $E_I:=\sum_{v\in I}E_v$ for any subset $I\subset \calv$.
For the cycle $l=\sum n_vE_v$ let its support be $|l|=\cup_{n_v\not=0}E_v$.
For more details see \cite{trieste,NCL,Nfive}.
\subsection{Topological invariants}\label{ss:topol}
Let $\calt$ be the dual resolution graph
associated with $\phi$;  it  is a connected graph.
Then $M:=\partial \widetilde{X}$ can be identified with the link of $(X,o)$, it is also
an oriented  plumbed 3--manifold associated with $\calt$.
We will assume that  $M$ is a rational homology sphere,
or, equivalently,  $\mathcal{T}$ is a tree and all genus
decorations of $\mathcal{T}$ are zero. We use the same
notation $\mathcal{V}$ for the set of vertices, and $\delta_v$ for the valency of a vertex $v$.

$L:=H_2(\widetilde{X},\mathbb{Z})$, endowed
with a negative definite intersection form  $I=(\,,\,)$, is a lattice. It is
freely generated by the classes of 2--spheres $\{E_v\}_{v\in\mathcal{V}}$.
 The dual lattice $L':=H^2(\widetilde{X},\mathbb{Z})$ is generated
by the (anti)dual classes $\{E^*_v\}_{v\in\mathcal{V}}$ defined
by $(E^{*}_{v},E_{w})=-\delta_{vw}$, the opposite of the Kronecker symbol.
The intersection form embeds $L$ into $L'$. Then $H_1(M,\mathbb{Z})\simeq L'/L$, abridged by $H$.
Usually one also identifies $L'$ with those rational cycles $l'\in L\otimes \Q$ for which
$(l',L)\in\Z$, or, $L'={\rm Hom}_\Z(L,\Z)$.

Each class $h\in H=L'/L$ has a unique representative $r_h=\sum_vr_vE_v\in L'$ in the semi-open cube
(i.e. each $r_v\in \bQ\cap [0,1)$), such that its class  $[r_h]$ is $h$.

All the $E_v$--coordinates of any $E^*_u$ are strict positive.
We define the Lipman cone as $\calS':=\{l'\in L'\,:\, (l', E_v)\leq 0 \ \mbox{for all $v$}\}$.
It is generated over $\bZ_{\geq 0}$ by $\{E^*_v\}_v$.

\subsection{Analytic invariants}\label{ss:analinv}
{\bf The group ${\rm Pic}(\widetilde{X})$}
of  isomorphism classes of analytic line bundles on $\widetilde{X}$ appears in the exact sequence
\begin{equation}\label{eq:PIC}
0\to {\rm Pic}^0(\widetilde{X})\to {\rm Pic}(\widetilde{X})\stackrel{c_1}
{\longrightarrow} L'\to 0, \end{equation}
where  $c_1$ denotes the first Chern class. Here
$ {\rm Pic}^0(\widetilde{X})=H^1(\widetilde{X},\calO_{\widetilde{X}})\simeq
\C^{p_g}$, where $p_g$ is the {\it geometric genus} of
$(X,o)$. $(X,o)$ is called {\it rational} if $p_g(X,o)=0$.
 Artin characterized rationality topologically
via the graphs; such graphs are called `rational'. By this criterion, $\calt$
is rational if and only if $\chi(l)\geq 1$ for any effective non--zero cycle $l\in L_{>0}$.
Here $\chi(l)=-(l,l-Z_K)/2$, where $Z_K\in L'$ is the (anti)canonical cycle
identified by adjunction formulae
$(-Z_K+E_v,E_v)+2=0$ for all $v$.

The epimorphism
$c_1$ admits a unique group homomorphism section $l'\mapsto s(l')\in {\rm Pic}(\widetilde{X})$,
 which extends the natural
section $l\mapsto \calO_{\widetilde{X}}(l)$ valid for integral cycles $l\in L$, and
such that $c_1(s(l'))=l'$  \cite{trieste,OkumaRat}.
We call $s(l')$ the  {\it natural line bundles} on $\widetilde{X}$ and we denote it by $\calO_{\widetilde{X}}(l')$.
By  their definition, $\calL$ is natural if and only if some power $\calL^{\otimes n}$
of it has the form $\calO_{\tX}(-l)$ for some $l\in L$.

Furthermore for an arbitrary effective non--zero integral cycle $Z\in L_{>0}$ let's denote the restriction of the line bundle $\calO_{\widetilde{X}}(l')$ to the cycle $Z$
by $\calO_{Z}(l')$.

If we denote the $*$-restriction map by $R : L' \to L'_{|Z|}$, then we have $c_1(\calO_{Z}(l')) = R(l')$.

\subsection{Rational line bundles}

In the following let's introduce the notation of rational line bundles on an effective cycle $Z$, which is supported on the resolution of a normal surface singularity $\tX$:

\begin{definition}
Let's have a normal surface singularity with resolution $\tX$ and an effective integer cycle $Z > 0$ on it and furthermore $l'' \in L'_{|Z|} \otimes \bQ$.

A rational line bundle on $Z$ with Chern class $l''$ is an equivalence class of a pair of an integer and a line bundle $(N, \calL)$, such that $N \cdot l'' \in L'_{|Z|}$ and $\calL \in \pic^{N \cdot l''}(Z)$
and we say, that the two pairs $(N_1, \calL_1)$ and $(N_2, \calL_2)$ are equivalent if $N_2 \cdot \calL_1 \cong N_1 \cdot \calL_2$.

We call $l''$ the Chern class of the rational line bundle, and we denote the set of rational line bundles with Chern class $l''$ by $\pic^{l''}(Z)$.
If $\calL \in \pic^{l''}(Z)$, we denote $c^1(\calL) = l''$.
\end{definition}

Since the Picard groups $\pic^{l'}(Z), l' \in L'$ are torsion free and are isomorphic to $H^1(\calO_Z)$ as affine spaces, we get that for any $l'' \in L'_{|Z|} \otimes \bQ$ we have $\pic^{l''}(Z) \cong H^1(\calO_Z)$.

If we have two rational line bundles $\calL_1 \in \pic^{l''_1}(Z)$ and $\calL_2 \in \pic^{l''_2}(Z)$, then define $\calL_1 \otimes \calL_2 \in \pic^{l''_1 + l''_2}(Z)$ in the following way:

If $\calL_1$ is represented by $(N_1, \calL_{s, 1})$ and $\calL_2$ is represented by $(N_2, \calL_{s, 2})$ , then $\calL_1 \otimes \calL_2$ is  represented by $(N_2 \cdot N_1, N_1 \cdot \calL_{s, 2} \otimes N_2 \cdot \calL_{s, 1} )$.
It's easy to see, that the equivalence class of this pair is independent of the representations of the rational line bundles $\calL_1, \calL_2$.

Similarly we can define $\calL^{-1}$ and $t \cdot \calL$ for any rational number $t \in \bQ$ and any rational line bundle $\calL$ with Chern classes $-l''$ and $t \cdot l''$ respectively.

If we have any divisor $D \in \eca^{l'}(Z)$ for a Chern class and $r \in \bQ$ is a rational number, then the pair $(N, \calO_Z(Nr \cdot D))$ defines a rational line bundle, 
if $Nr \in \bZ$, and we denote the corresponding rational line bundle by $\calO_Z(r \cdot D)$.

\subsection{Minkowski sum of affine varieties}

Let's use the notation in the article, that if $X$ and $Y$ are two subsets of a complex vector space $\bC^{N}$, then we denote by $X \oplus Y$ the Minkowski sum of
the two subsets.

Notice, that if $X \subset \bC^N$ is some irreducible analytic subvariety and $ Y \in \bC^N$ is some other irreducible analytic subvariety, then $\dim( Y \oplus X) = \dim(X)$, if and only if $Y \oplus X = p \oplus X$ for some $p \in Y$, if and only if $ A(Y) \oplus X = p \oplus X$ for some $p \in Y$, where $A(Y)$ is the affine hull of $Y$.

For more about minkowski sums of affine varieties look at \cite{Min}.

\section{Effective Cartier divisors and Abel maps}

\subsection{}  Let
$\eca(Z)$  be the space of effective Cartier divisors on 
$Z$ introduced in \cite{NNA1}.
 Their support is zero--dimensional in $E$.

Taking the class of a Cartier divisor provides  a map
$c:\eca(Z)\to \pic(Z)$, which we call the Abel map.

Let
$\eca^{l'}(Z)$ be the set of effective Cartier divisors with
Chern class $l'\in L'$, that is,
$\eca^{l'}(Z):=c^{-1}(\pic^{l'}(Z))$.
For any $Z_2\geq Z_1>0$ one has the commutative diagram
\begin{equation}\label{eq:diagr}
\begin{picture}(200,45)(0,0)
\put(50,37){\makebox(0,0)[l]{$
\eca^{l'}(Z_2)\,\longrightarrow \, \pic^{l'}(Z_2)$}}
\put(50,8){\makebox(0,0)[l]{$
\eca^{l'}(Z_1)\,\longrightarrow \, \pic^{l'}(Z_1)$}}
\put(70,22){\makebox(0,0){$\downarrow$}}
\put(135,22){\makebox(0,0){$\downarrow$}}
\end{picture}
\end{equation}

Let us fix  $Z\in L$, $Z\geq E$. (The restriction $Z\geq E$ is imposed by the
easement of the presentation, everything can be adopted  for $Z>0$).

As usual, we say that $\calL\in \pic^{l'}(Z)$ has no fixed components if
\begin{equation}\label{eq:H_0}
H^0(Z,\calL)_{reg}:=H^0(Z,\calL)\setminus \bigcup_{v \in |Z|} H^0(Z-E_v, \calL(-E_v))
\end{equation}
is non--empty. 

Note that $H^0(Z,\calL)$ is a module over the algebra
$H^0(\calO_Z)$, hence one has a natural action of $H^0(\calO_Z^*)$ on
$H^0(Z, \calL)_{reg}$. For the next lemma see e.g. \cite[\S 3]{Kl}.

\begin{lemma}\label{lem:H_0} Consider the restriction of $c$, $c^{l'}:\eca^{l'}(Z)
\to \pic^{l'}(Z)$. Then $\calL$ is in the image of $c$ if and only if
$H^0(Z,\calL)_{reg}\not=\emptyset$. In this case, $c^{-1}(\calL)=H^0(Z,\calL)_{reg}/H^0(\calO_Z^*)$.
\end{lemma}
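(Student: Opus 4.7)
The plan is to prove both assertions by directly exploiting the standard correspondence between effective Cartier divisors and regular sections of the associated line bundle, in the form adapted to the non-reduced scheme $Z$. Throughout I treat sections of $\calL$ over $Z$ as elements of a module over $H^0(\calO_Z)$, and I use that $Z$ has no embedded points away from $E$ (its generic points are the generic points of the components $E_v$, for $v\in|Z|$).

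First I would establish that the image of $c^{l'}$ equals the set of $\calL\in\pic^{l'}(Z)$ with $H^0(Z,\calL)_{reg}\neq\emptyset$. For the ``only if'' direction, suppose $\calL=c^{l'}(D)$ for some $D\in\eca^{l'}(Z)$. By definition $\calL\cong\calO_Z(D)$, and the image of $1\in H^0(\calO_Z)$ under the canonical map $\calO_Z\to\calO_Z(D)$ gives a distinguished section $s_D\in H^0(Z,\calL)$. Since $D\in\eca(Z)$ has zero-dimensional support, $s_D$ cannot vanish identically on any irreducible component $E_v$ of $|Z|$; equivalently, $s_D$ does not lie in the image of $H^0(Z-E_v,\calL(-E_v))\to H^0(Z,\calL)$ for any $v\in|Z|$. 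Hence $s_D\in H^0(Z,\calL)_{reg}$. Conversely, given $s\in H^0(Z,\calL)_{reg}$, the hypothesis that $s$ is not in any of the submodules $H^0(Z-E_v,\calL(-E_v))$ means precisely that $s$ is a regular (non zero-divisor) section on every generic point of $Z$. Hence $s$ locally generates a principal ideal of the form $\calL^{-1}\hookrightarrow\calO_Z$, cutting out an effective Cartier divisor $D$ on $Z$ with zero-dimensional support, so $D\in\eca^{l'}(Z)$ and $c^{l'}(D)=\calL$.

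Next I would identify the fiber $c^{-1}(\calL)$ with $H^0(Z,\calL)_{reg}/H^0(\calO_Z^*)$. There is a natural map $H^0(Z,\calL)_{reg}\to\eca^{l'}(Z)$ sending $s$ to the divisor $\mathrm{div}(s)$ constructed in the previous step, and by construction it lands in $c^{-1}(\calL)$. The group $H^0(\calO_Z^*)$ acts on the source by multiplication, and this action obviously preserves the associated divisor, so we obtain a well-defined map $H^0(Z,\calL)_{reg}/H^0(\calO_Z^*)\to c^{-1}(\calL)$. Surjectivity follows from the ``only if'' direction above: any $D\in c^{-1}(\calL)$ admits a distinguished section $s_D$ in $H^0(Z,\calL)_{reg}$ mapping back to it.

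For injectivity, suppose $s_1,s_2\in H^0(Z,\calL)_{reg}$ give rise to the same Cartier divisor $D$, i.e.\ generate the same invertible ideal $\calL^{-1}\subset\calO_Z$ under the isomorphisms $\calO_Z(D)\cong\calL$ they induce. Then locally $s_2=u\cdot s_1$ for a section $u$ of $\calO_Z$; because $s_1,s_2$ both generate the same rank-one $\calO_Z$-module at every generic point of $Z$, $u$ is a unit at every generic point. Since $Z$ has no embedded components away from $E$ and the Cartier divisors supported on $E$ (outside the zero-dimensional locus of $D$) are trivial, $u\in H^0(\calO_Z^*)$, yielding the required equivalence. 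The only subtle point — and the main thing to pin down carefully — is the interaction of ``regular section'' with the non-reduced structure of $Z$, which is exactly what the condition $s\notin H^0(Z-E_v,\calL(-E_v))$ for all $v\in|Z|$ is designed to encode; once that is clear, the rest is the standard correspondence as in \cite[\S 3]{Kl}.
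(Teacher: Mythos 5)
Your argument is correct and is exactly the standard divisor--section correspondence that the paper itself invokes by citing \cite[\S 3]{Kl} rather than giving a proof: the condition $s\notin H^0(Z-E_v,\calL(-E_v))$ for all $v\in|Z|$ is precisely non-divisibility by the local equation of $E_v$ at its generic point, which (since $Z$ is a divisor in a smooth surface, hence Cohen--Macaulay with no embedded points) makes $s$ a non-zero-divisor, and the rest follows as you describe. The only place worth tightening is the injectivity step, where the local units $u$ with $s_2=us_1$ glue to a global unit exactly because $s_1$ is a non-zero-divisor on all of $Z$, not merely at the generic points.
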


Note that $H^0(Z,\calL)_{reg}\not=\emptyset \ \Rightarrow\  l'\in -\calS'$, conversely, if $l'=-\sum_vm_vE^*_v\in -\calS'$, for certain $m_v\in\bZ_{\geq 0}$, then one can construct
for each $E_v$ cuts in $\widetilde{X}$
intersecting $E_v$ in a generic point and having with  it intersection multiplicity $m_v$. Their collection
$D$ provides an element in $\eca^{l'}(Z)$ whose image by $c$ is
 $\calO_Z(D)\in\pic^{l'}(Z)$. Therefore
\begin{equation}\label{eq:empty}
\eca^{l'}(Z)\not =\emptyset \ \ \Leftrightarrow \ \ l'\in -\calS'.
\end{equation}

The action of  $H^0(\mathcal{O}^{*}_{Z})$ can be analysed quite explicitly.

 Note that from the exact sequence
$ 0 \to H^0(\mathcal{O}_{Z-E}(-E)) \to H^0(\mathcal{O}_{Z}) \stackrel{r_E}{\longrightarrow}
 H^0(\mathcal{O}_{E})=\bC\to 0$
one gets that
  $ H^0(\mathcal{O}^{*}_{Z}) = r_E^{-1}(\bC^*)=
  H^0(\mathcal{O}_{Z}) \setminus H^0(\mathcal{O}_{Z-E}(-E))$.
 In particular, the projectivized $\bP H^0(\calO^*_Z)$,
 as algebraic group, is isomorphic with the vector space $H^0(\mathcal{O}_{Z-E}(-E))$,
 and $H^0(Z,\calL)_{reg}/H^0(\calO^*_Z)=\bP H^0(Z,\calL)_{reg}/\bP H^0(\calO^*_Z)$.

We have the following lemma and theorem from \cite{NNA1}:

 \begin{lemma}\label{lem:free} Assume that $H^0 (Z,\calL)_{reg}\not=\emptyset$. Then

 (a) the action of $H^0(\calO_Z^*)$ on $H^0(Z,\calL)_{reg}$ is algebraic and free, and

 (b) $ H^0(Z,\calL)_{reg}$ over $H^0(Z,\calL)_{reg}/H^0(\calO_Z^*)$ is a principal  bundle, or, equivalently,
 $\bP H^0(Z,\calL)_{reg}$ over $\bP H^0(Z,\calL)_{reg}/\bP H^0(\calO_Z^*)$ is a principal affine bundle.

 \noindent
 Hence, the fiber $c^{-1}(\calL)$, $\calL\in \im (c^{l'})$,
 is a smooth, irreducible quasiprojective variety of
  dimension
 \begin{equation}\label{eq:dimfiber}
 h^0(Z,\calL)-h^0(\calO_Z)=
 (l',Z)+h^1(Z,\calL)-h^1(\calO_Z).
 \end{equation}
 \end{lemma}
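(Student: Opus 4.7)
The plan is to establish (a) by an algebraic argument based on the Cartier-divisor characterization of $H^0(Z,\calL)_{reg}$, then deduce (b) by treating the torus and unipotent parts of $H^0(\calO_Z^*)$ separately, and finally compute the dimension via Riemann--Roch on the surface $\tX$.

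For (a), the key algebraic fact is that $s \in H^0(Z,\calL)_{reg}$ if and only if the multiplication map $\calO_Z \xrightarrow{\,\cdot s\,} \calL$ is injective, i.e.\ $s$ defines an effective Cartier divisor on $Z$. Indeed, if $s$ vanished along some component $E_v\subset |Z|$ it would lie in $H^0(Z-E_v,\calL(-E_v))$, contradicting regularity; conversely, a section not vanishing along any full component determines such an injection. Given this, if $u\in H^0(\calO_Z^*)$ satisfies $u\cdot s=s$, then $(u-1)\cdot s=0$, and the injectivity just noted forces $u-1=0$. Algebraicity is immediate because multiplication of sections is polynomial in the coordinates of $H^0(\calO_Z)$ and $H^0(Z,\calL)$.

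For (b), recall from the discussion just before the lemma that $G:=H^0(\calO_Z^*)$ is a connected algebraic group fitting into
\[
1 \;\longrightarrow\; H^0(\calO_{Z-E}(-E)) \;\longrightarrow\; G \;\xrightarrow{\,r_E\,}\; \bC^* \;\longrightarrow\; 1,
\]
so $G$ is an extension of $\bC^*$ by the unipotent vector group $V:=H^0(\calO_{Z-E}(-E))$ and has dimension $h^0(\calO_Z)$. The $\bC^*$-factor acts by scaling sections, and since every element of $H^0(Z,\calL)_{reg}$ is non-zero this subaction is free, yielding a principal $\bC^*$-bundle $H^0(Z,\calL)_{reg}\twoheadrightarrow \bP H^0(Z,\calL)_{reg}$ onto a quasi-projective variety. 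The residual action of $\bP G\cong V$ on $\bP H^0(Z,\calL)_{reg}$ is still free by (a), and a free action of a unipotent vector group on a quasi-projective variety admits a geometric quotient which is a principal $V$-bundle (Rosenlicht's theorem, or explicitly by choosing an affine linear slice transverse to one orbit and translating). Composing the two steps yields the claimed principal $G$-bundle structure on $H^0(Z,\calL)_{reg}\to H^0(Z,\calL)_{reg}/G$.

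Finally, since $H^0(Z,\calL)_{reg}$ is a non-empty Zariski-open subset of the vector space $H^0(Z,\calL)$ — hence smooth and irreducible — and $G$ acts freely with geometric quotient, the fiber $c^{-1}(\calL)=H^0(Z,\calL)_{reg}/G$ is smooth, irreducible, and quasi-projective of dimension $h^0(Z,\calL)-h^0(\calO_Z)$. To identify this with $(l',Z)+h^1(Z,\calL)-h^1(\calO_Z)$ it suffices to verify the Euler-characteristic identity $\chi(\calO_Z(l'))=\chi(\calO_Z)+(l',Z)$, which follows from the exact sequence $0\to\calO_{\tX}(l'-Z)\to\calO_{\tX}(l')\to\calO_Z(l')\to 0$ together with the surface Riemann--Roch formula on $\tX$ applied to each $\calO_{\tX}$-term. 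The only genuine subtlety in the whole argument is promoting the set-theoretic quotient in (b) to a geometric principal-bundle structure; this is standard but does require invoking free-action results for an extension of $\bC^*$ by a unipotent vector group rather than relying on (a) alone.
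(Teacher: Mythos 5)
The paper does not actually prove this lemma: it is quoted verbatim from \cite{NNA1} (``We have the following lemma and theorem from \cite{NNA1}''), so there is no in-paper argument to compare against. Judged on its own, your proof is correct and follows the standard route one would expect from that reference. The identification of $H^0(Z,\calL)_{reg}$ with the regular (non-zerodivisor) sections, i.e.\ those for which $\calO_Z\xrightarrow{\cdot s}\calL$ is injective, is exactly the Cartier-divisor characterization from Kleiman's treatment cited just before Lemma \ref{lem:H_0}, and it gives freeness immediately via $(u-1)s=0\Rightarrow u=1$. Your two-step quotient --- first by the scaling $\bC^*$, then by the unipotent vector group $\bP H^0(\calO_Z^*)\cong H^0(\calO_{Z-E}(-E))$ --- matches the decomposition the paper itself sets up in the exact sequence preceding the lemma, and you correctly flag the only genuinely delicate point, namely upgrading the free unipotent action to a locally trivial principal bundle (Rosenlicht / an explicit affine slice; in this setting the orbits are affine subspaces of $\bP H^0(Z,\calL)$, so the slice argument does go through). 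The dimension count is fine: one small streamlining is that $\chi(Z,\calL)-\chi(\calO_Z)=(l',Z)$ holds for \emph{any} $\calL\in\pic^{l'}(Z)$ by Riemann--Roch on the cycle $Z$ (the Euler characteristic depends only on the Chern class), so you do not need to realize $\calL$ as the restriction of a line bundle from $\tX$ as your exact-sequence argument implicitly does.
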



 \begin{theorem}\label{th:smooth} If $l'\in-\calS'$ then the following facts hold.

  (1)  $\eca^{l'}(Z)$ is a smooth variety of dimension $(l',Z)$.

  (2) The natural restriction map $r:\eca^{l'}(Z)\to \eca^{l'}(E)$ is a
  locally trivial  fiber bundle with fiber isomorphic to an affine space. Hence,
 the homotopy type of $\eca^{l'}(Z)$ is independent of the choice of $Z$ and
 it depends only on the topology of $(X,o)$.
 \end{theorem}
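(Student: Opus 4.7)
The strategy is to establish both assertions by a local analysis at the support points of divisors, patched together by a product decomposition. Given $D \in \eca^{l'}(Z)$ with support $\{p_1,\dots,p_s\}\subset E$, an \'etale neighborhood of $D$ in $\eca^{l'}(Z)$ decomposes as a product of local moduli $\eca^{\mathrm{loc}}_{p_i}$ parameterizing effective Cartier divisors on $(Z,p_i)$ with prescribed local intersection multiplicities along the components of $E$ through $p_i$. The restriction map $r:\eca^{l'}(Z)\to \eca^{l'}(E)$ respects this product decomposition, so smoothness, dimension count, and the affine-bundle statement all reduce to corresponding local assertions at each $p_i$.

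For the local computation I would distinguish two cases. At a smooth point $p$ of some $E_v$, take coordinates $(x,y)$ on $\tX$ with $E_v=\{y=0\}$, so $\calO_{Z,p}=\bC\{x,y\}/(y^{Z_v})$. By Weierstrass preparation, a local equation for an effective Cartier divisor of intersection multiplicity $k$ with $E_v$ near $p$ has the normal form $f=x^k+\sum_{i=0}^{k-1}a_i(y)\,x^i$ with $a_i(y)\in\bC[y]/(y^{Z_v})$ as free parameters; this identifies $\eca^{\mathrm{loc}}_p$ with the affine space $\bC^{kZ_v}$, smooth of dimension $kZ_v$, and the restriction to $E=\{y=0\}$ becomes the linear surjection $(a_i(y))_i\mapsto(a_i(0))_i$ with fiber $\bC^{k(Z_v-1)}$. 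At a node $p=E_u\cap E_v$ with coordinates chosen so that $E_u=\{x=0\}$, $E_v=\{y=0\}$, we have $\calO_{Z,p}=\bC\{x,y\}/(x^{Z_u}y^{Z_v})$, and a similar but more delicate normal form for divisors with multiplicities $(k_u,k_v)$ along the two branches realizes $\eca^{\mathrm{loc}}_p$ as an affine space of dimension $k_uZ_u+k_vZ_v$, with the map to the corresponding local piece of $\eca^{l'}(E)$ again a linear surjection whose fiber is an affine space.

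Summing the local dimensions across all support points and components yields $\dim\eca^{l'}(Z)=\sum_v Z_v(\sum_i k_{i,v})=\sum_v Z_v m_v=(l',Z)$ for $l'=-\sum_v m_v E_v^*$, proving (1). For the local triviality of $r$, one parameterizes the support points in a family over an \'etale neighborhood in $\eca^{l'}(E)$, transporting the normal-form coordinates in a family and thereby trivializing the local affine-space bundles globally over such a neighborhood. Since locally trivial fiber bundles with affine-space fibers induce homotopy equivalences between total space and base, we conclude $\eca^{l'}(Z)\simeq\eca^{l'}(E)$. The target $\eca^{l'}(E)$ depends only on the combinatorics of $E$, hence only on the dual graph $\calt$, which is a topological invariant of $(X,o)$ under the rational homology sphere assumption; this gives the final homotopy-type statement.

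The main technical obstacle is the local analysis at nodes of $E$. The ring $\bC\{x,y\}/(x^{Z_u}y^{Z_v})$ is not an integral domain, and the straightforward Weierstrass preparation no longer applies; one must carefully choose a normal form for local equations of effective Cartier divisors with prescribed double multiplicity $(k_u,k_v)$, verify that the resulting local moduli is an affine space (not merely smooth), and that the restriction map to $\eca^{l'}(E)$ is linear in the chosen coordinates. Ensuring that this can be done uniformly in $(k_u,k_v)$, including the degenerate cases $k_u=0$ or $k_v=0$ which must match the smooth-point analysis, is the delicate part.
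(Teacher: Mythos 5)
You should first note that this paper does not actually prove Theorem \ref{th:smooth}: it is explicitly recalled from \cite{NNA1} ("We have the following lemma and theorem from \cite{NNA1}"), so there is no in-paper proof to compare against, and your proposal has to be measured against the argument in that reference. Your overall framework --- decompose an analytic neighborhood of $D\in\eca^{l'}(Z)$ as a product of local moduli at the support points, put local equations into Weierstrass normal form, and read off smoothness, the dimension count $\sum_v Z_v m_v=(l',Z)$, and the affine fibration over $\eca^{l'}(E)$ --- is the standard route and is, in outline, the one taken in the cited source. The smooth-point analysis ($\calO_{Z,p}=\C\{x,y\}/(y^{Z_v})$, normal form $x^k+\sum_{i<k}a_i(y)x^i$, restriction $(a_i(y))\mapsto(a_i(0))$) is correct as written.

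The genuine gap is the one you yourself flag: the analysis at the nodes of $E$, which you defer entirely. This is not a routine extension of the smooth-point case, and in fact the statement you propose to prove there --- that the local moduli at a node with multiplicities $(k_u,k_v)$ is an \emph{affine space} of dimension $k_uZ_u+k_vZ_v$ --- is false. Already for $Z=E$ and $(k_u,k_v)=(1,1)$, local equations $(x-t)(y-s)\equiv -sx-ty+st$ in $\C\{x,y\}/(xy)$ parameterize pairs of points, one on each branch, and the divisors concentrated at the node form a one-parameter family $x+\lambda y$, $\lambda\in\C^*$ (the values $\lambda=0,\infty$ give zero-divisors and are excluded); the resulting local moduli is the blow-up of $\C^2_{s,t}$ at the origin with two points of the exceptional line removed --- smooth of the expected dimension $2$, but not isomorphic to $\C^2$. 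So the correct target at a node is only smoothness of $\eca^{l'}(Z)$ together with the affine-bundle property of $r$ (whose fibers see only the thickening directions transverse to $E$), and this requires an argument genuinely different from the linear normal-form identification you use at smooth points. Since divisors passing through nodes do occur in $\eca^{l'}(Z)$ and smoothness is a pointwise condition, this case cannot be dismissed by a density argument; as it stands the proposal proves the theorem only on the open locus of divisors supported at smooth points of $E$.
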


Consider again an integer effective cycle $Z $, and a Chern class $l'\in-\calS'$ associated with a resolution $\tX$, as above. 

Then, besides the Abel map $c^{l'}(Z)$ one can consider its `multiples' $\{c^{nl'}(Z)\}_{n\geq 1}$.
 It turns out that $n\mapsto \dim \im (c^{nl'}(Z))$ is a non-decreasing sequence, which stabilises after a while.

The image  $\im (c^{nl'}(Z))$ is an affine subspace for $n\gg 1$, whose dimension $e_Z(l')$ is independent of $n\gg 0$, and essentially it depends only
on the $E^*$--support of $l'$ (i.e., on $I\subset \calv$, where $-l'=\sum_{v\in I}a_vE^*_v$ with all
$\{a_v\}_{v\in I}$ nonzero).
The statement  $e_Z(l')=e_Z(I)$ plays a crucial role in different analytic properties of $\tX$
(surgery formula, $h^1(\calL)$--computations, base point freeness properties). For details see \cite{NNA1}.

If $v \in \calv$ is an arbitrary vertex and $n$ is a large integer, then $\im (c^{-n E_v^*}(Z))$ is an affine subspace and it is parallel to a linear subspace, 
which we denote by $V_v(Z)$ and this subspace is independent of the chosen integer $n$.

Similarly if $l' \in -S'$ with $|l'| = I$, and $n$ is a large integer, then $\im (c^{-n l'}(Z))$ is an affine subspace and it is parallel to a linear subspace, 
which we denote by $V_I(Z)$ and this subspace is independent of the chosen integer $n$.

For more about the subspaces $V_v(Z), V_I(Z)$ see \cite{NNA1}.

\section{Relatively generic analytic structures on surface singularities}

In this section we wish to summarise the results from \cite{R} about relatively generic analytic structures we need in this article. 

\subsection{The relative setup.}

We consider an integer cycle $Z$ on a resolution $\tX$ with resolution graph $\mathcal{T}$, and a smaller cycle $Z_1 \leq Z$, where we denote $|Z_1| = \calv_1$ and the subgraph corresponding to it by $\mathcal{T}_1$.

We have the restriction map $r: \pic(Z)\to \pic(Z_1)$ and one has also the (cohomological) restriction operator
  $R_1 : L'(\mathcal{T}) \to L_1':=L'(\mathcal{T}_1)$
(defined as $R_1(E^*_v(\mathcal{T}))=E^*_v(\mathcal{T}_1)$ if $v\in \calv_1$, and
$R_1(E^*_v(\mathcal{T}))=0$ otherwise).

For any $\calL\in \pic(Z)$ and any $l'\in L'(\mathcal{T})$ it satisfies
\begin{equation*}
c_1(r(\calL))=R_1(c_1(\calL)).
\end{equation*}

In particular,
we have the following commutative diagram as well:

\begin{equation*}  
\begin{picture}(200,40)(30,0)
\put(50,37){\makebox(0,0)[l]{$
\ \ \eca^{l'}(Z)\ \ \ \ \ \stackrel{c^{l'}(Z)}{\longrightarrow} \ \ \ \pic^{l'}(Z)$}}
\put(50,8){\makebox(0,0)[l]{$
\eca^{R_1(l')}(Z_1)\ \ \stackrel{c^{R_1(l')}(Z_1)}{\longrightarrow} \  \pic^{R_1(l')}(Z_1)$}}
\put(162,22){\makebox(0,0){$\downarrow \, $\tiny{$r$}}}
\put(78,22){\makebox(0,0){$\downarrow \, $\tiny{$\fr$}}}
\end{picture}
\end{equation*}

By the `relative case' we mean that instead of the `total' Abel map
$c^{l'}(Z)$ we study its restriction above a fixed fiber of $r$.

That is, we fix some  $\mfl\in \pic^{R_1(l')}(Z_1)$, and we study
the restriction of $c^{l'}(Z)$ to $(r\circ c^{l'}(Z))^{-1}(\mfl)\to r^{-1}(\mfl)$.

 The subvariety $(r\circ c^{l'}(Z))^{-1}(\mfl)
=(c^{R_1(l')}(Z_1) \circ \fr)^{-1}(\mfl) \subset \eca^{l'}(Z)$ is denoted by $\eca^{l', \mfl}$.

\begin{theorem}\label{cor:smoothirreddim}
Fix $l'\in -\calS'$, $Z\geq E$, $Z_1 \leq Z$ and  $\mfl\in \pic^{R_1(l')}(Z_1)$ and assume, that  $\eca^{l', \mfl}$ is nonempty. 
Then it is smooth of dimension $h^1(Z_1,\mfl)  - h^1(Z_1,\calO_{Z_1})+ (l', Z)$ and it is irreducible.
\end{theorem}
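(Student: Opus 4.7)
The plan is to exhibit $\eca^{l',\mfl}$ as the preimage of a smooth irreducible subvariety under a smooth surjective morphism, and read off smoothness, dimension, and irreducibility directly.

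From the commutative square preceding the statement, $\eca^{l',\mfl}=\fr^{-1}\bigl((c^{R_1(l')}(Z_1))^{-1}(\mfl)\bigr)$. By Lemma~\ref{lem:free} applied to the Abel map on $Z_1$, the base $(c^{R_1(l')}(Z_1))^{-1}(\mfl)$ is smooth and irreducible of dimension $(l',Z_1)+h^1(Z_1,\mfl)-h^1(\calO_{Z_1})$, using the elementary compatibility $(R_1(l'),Z_1)=(l',Z_1)$.

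The main step is to show that $\fr:\eca^{l'}(Z)\to\eca^{R_1(l')}(Z_1)$ is smooth of relative dimension $(l',Z-Z_1)$. Fix $D\in\eca^{l'}(Z)$ and put $D_1:=D|_{Z_1}$. Using the tangent space identifications $T_D\eca^{l'}(Z)=H^0(\calO_D(D))$ and $T_{D_1}\eca^{R_1(l')}(Z_1)=H^0(\calO_{D_1}(D_1))$ from the proof of Theorem~\ref{th:smooth}(1), the differential $T_D\fr$ is the obvious restriction on global sections. Since $\calO_D(D)\twoheadrightarrow\calO_{D_1}(D_1)$ is a surjection of zero-dimensional sheaves --- it is $\calO_D\twoheadrightarrow\calO_{D_1}$ tensored by the restriction of $\calO_Z(D)$ to the supports --- taking $H^0$ preserves the surjection. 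Hence $T_D\fr$ is surjective at every $D$, so $\fr$ is smooth, with relative dimension $(l',Z)-(l',Z_1)=(l',Z-Z_1)$.

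Pulling back a smooth subvariety along a smooth morphism, $\eca^{l',\mfl}$ is smooth of dimension
\[
(l',Z-Z_1)+(l',Z_1)+h^1(Z_1,\mfl)-h^1(\calO_{Z_1})=(l',Z)+h^1(Z_1,\mfl)-h^1(\calO_{Z_1}),
\]
as claimed. For irreducibility it suffices to show that the fibers of $\fr$ are irreducible; the natural approach is to prove, by the same local analysis as in Theorem~\ref{th:smooth}(2), that they are in fact affine spaces of dimension $(l',Z-Z_1)$, obtained by explicitly parameterizing the extensions of $D_1$ both across the components $E_v$ with $v\in\calv\setminus\calv_1$ and across the non-reduced directions of $Z$ over $\calv_1$. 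Granted this, $\eca^{l',\mfl}$ is a fibration with irreducible fibers over an irreducible base, hence irreducible. I expect the main technical obstacle to be precisely this generalization of Theorem~\ref{th:smooth}(2) from the reduced case $Z_1=E$ to arbitrary $Z_1\le Z$, which may be neither reduced nor of full support; by contrast, the smoothness and dimension formula follow cleanly from the tangent-space computation above once this affine-bundle structure is available.
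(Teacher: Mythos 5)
A preliminary remark: this paper does not prove Theorem \ref{cor:smoothirreddim} at all --- Section 4 explicitly only summarises results imported from \cite{R} --- so your argument has to stand on its own. Its first half does. The identity $(R_1(l'),Z_1)=(l',Z_1)$ is immediate from the definition of the cohomological restriction operator; Lemma \ref{lem:free} applies on $Z_1$ because nonemptiness of $\eca^{l',\mfl}$ forces $\mfl\in\im(c^{R_1(l')}(Z_1))$, and it gives the base fiber smooth and irreducible of dimension $(l',Z_1)+h^1(Z_1,\mfl)-h^1(\calO_{Z_1})$; and your surjectivity argument for $T_D\fr$ is sound, since $\calO_D(D)\to\calO_{D_1}(D_1)$ is a surjection of sheaves with zero-dimensional support and $H^0$ is exact on such. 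The dimension count then comes out as stated.

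The gap is in the irreducibility step: the fibers of $\fr:\eca^{l'}(Z)\to\eca^{R_1(l')}(Z_1)$ are \emph{not} affine spaces in general, so the proposed generalization of Theorem \ref{th:smooth}(2) is false as you state it. Concretely, take $\mathcal{T}$ a chain $v_0-v_1-v_2$, $\calv_1=\{v_0,v_2\}$, $Z=E$, $Z_1=E_{v_0}+E_{v_2}$, $l'=-E^*_{v_1}$. Then $R_1(l')=0$, the target of $\fr$ is a point, and the fiber is all of $\eca^{-E^*_{v_1}}(E)\cong E_{v_1}\setminus (E_{v_0}\cup E_{v_2})\cong \bC^*$: irreducible, of the predicted dimension $(l',Z-Z_1)=1$, but not an affine space (this example also shows $\eca^{l',\mfl}$ itself need not be an affine space). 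The reason is that restriction to $Z_1$ forgets not only infinitesimal thickening data (which do contribute affine directions) but also the entire part of $D$ lying over the vertices $v\notin\calv_1$, which moves in symmetric products of the punctured curves $E_v^{reg}$. What you actually need, and what is true, is only that the fibers of $\fr$ are irreducible: they are iterated affine bundles over products of symmetric products of the irreducible quasi-projective curves $E_v^{reg}$, $v\notin\calv_1$ (equivalently, over the space $\eca$ attached to the part of the Chern class supported outside $\calv_1$), hence irreducible though typically not isomorphic to $\bC^n$. Once that is established, your conclusion does follow: a smooth morphism is open, so each connected component of the smooth variety $\eca^{l',\mfl}$ has dense open image in the irreducible base $(c^{R_1(l')}(Z_1))^{-1}(\mfl)$, and irreducibility of a common fiber forces a single component. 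So the route is salvageable, but the affine-space claim must be replaced by this weaker irreducibility statement, proved via the fibered description of $\fr^{-1}(D_1)$ rather than by exhibiting global affine coordinates.
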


Let' recall from \cite{R} the analouge of the theroems about dominance of Abel maps in the relative setup:

\begin{definition}
Fix $l'\in -\calS'$,
$Z\geq E$, $Z_1 \leq Z$ and  $\mfl\in \pic^{R_1(l')}(Z_1)$ as above.
We say that the pair $(l',\mfl ) $ is {\it relative
dominant} if the closure of $ r^{-1}(\mfl)\cap {\rm Im}(c^{l'}(Z))$ in  $r^{-1}(\mfl)$
is $r^{-1}(\mfl)$.
\end{definition}

\begin{theorem}\label{th:dominantrel} One has the following facts:

(1) If $(l',\mfl)$ is relative dominant then $ \eca^{l', \mfl}$ is
nonempty and $h^1(Z,\calL)= h^1(Z_1,\mfl)$ for any
generic line bundle $\calL\in r^{-1}(\mfl)$.

(2) $(l',\mfl)$ is relative dominant  if and only if for all
 $0<l\leq Z$, $l\in L$ one has
$$\chi(-l')- h^1(Z_1, \mfl) < \chi(-l'+l)-
 h^1((Z-l)_1, (\mfl | (Z-l)_1)  \otimes \calO_{(Z-l)_1}(-l) ).$$, where we denote $(Z-l)_1 = \min(Z-l, Z_1)$.
\end{theorem}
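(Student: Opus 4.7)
The plan is to handle Part (1) by a short dimension count, and Part (2) by combining an infinitesimal criterion for relative dominance with a stratification of $\eca^{l',\mfl}$ by degenerations indexed by effective sub-cycles $0<l\leq Z$.

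For Part (1), relative dominance forces $r^{-1}(\mfl)\cap \im(c^{l'}(Z))$ to be nonempty (its closure is the nonempty space $r^{-1}(\mfl)$), hence $\eca^{l',\mfl}$ is nonempty. By Theorem \ref{cor:smoothirreddim}, $\eca^{l',\mfl}$ is irreducible and smooth of dimension $(l',Z)+h^1(Z_1,\mfl)-h^1(\calO_{Z_1})$, while $r^{-1}(\mfl)$ is an affine space of dimension $h^1(\calO_Z)-h^1(\calO_{Z_1})$, arising as a torsor over the kernel of the linear surjection $H^1(\calO_Z)\to H^1(\calO_{Z_1})$. Dominance together with irreducibility of the source forces the generic fibre of $\eca^{l',\mfl}\to r^{-1}(\mfl)$ to have dimension $(l',Z)+h^1(Z_1,\mfl)-h^1(\calO_Z)$. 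Lemma \ref{lem:free} computes this same fibre dimension as $(l',Z)+h^1(Z,\calL)-h^1(\calO_Z)$ for $\calL$ in the image, so $h^1(Z,\calL)=h^1(Z_1,\mfl)$ on a dense open subset of $r^{-1}(\mfl)$.

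For Part (2), the starting point is an infinitesimal criterion. Standard tangent analysis of the Abel map, applied to the exact sequence $0\to\calO_Z\to\calO_Z(D)\to\calO_Z(D)|_D\to 0$, identifies the cokernel of $T_D c^{l'}(Z)$ with $H^1(Z,\calO_Z(D))$. Passing to the restricted map $c^{l'}(Z)|_{\eca^{l',\mfl}}\to r^{-1}(\mfl)$ and chasing the resulting diagram, relative dominance is equivalent to the existence of some $D\in\eca^{l',\mfl}$ with $h^1(Z,\calO_Z(D))=h^1(Z_1,\mfl)$, i.e.\ with the obstruction matching its minimum possible value. To translate this into the combinatorial criterion, for each $0<l\leq Z$ in $L$ introduce the degeneration stratum $W_l\subset\eca^{l',\mfl}$ consisting of divisors whose associated section of $\calO_Z(D)$ vanishes along the full cycle $l$. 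Such divisors are naturally parametrized by an Abel map in the smaller setup $(Z-l,(Z-l)_1)$ with Chern class $l'-l$ and the twisted rational line bundle $\mfl|_{(Z-l)_1}\otimes\calO_{(Z-l)_1}(-l)$. Applying Theorem \ref{cor:smoothirreddim} to this smaller setup and invoking the Euler-characteristic identity $\chi(-l'+l)-\chi(-l')=\chi(l)+(l',l)$, one checks that $\dim W_l<\dim\eca^{l',\mfl}$ is exactly the strict inequality in (2); relative dominance then holds iff no $W_l$ dominates $r^{-1}(\mfl)$, equivalently iff the strict inequality holds for all $0<l\leq Z$.

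The main obstacle will be carrying out the dimension count for $W_l$ in full. One must verify that Theorem \ref{cor:smoothirreddim} applies to the pair $(Z-l,(Z-l)_1)$ in its twisted form, noting that $(Z-l)_1=\min(Z-l,Z_1)$ may differ from $Z_1$, and that $\mfl|_{(Z-l)_1}\otimes\calO_{(Z-l)_1}(-l)$ is a valid rational line bundle in the framework of Section 2.4. Once this is in place, the resulting dimension must be compared, via $\chi(-l'+l)-\chi(-l')=\chi(l)+(l',l)$, with $\dim\eca^{l',\mfl}$ so as to reproduce the right-hand side of the stated inequality on the nose. A secondary technical point is to show that the strata $\{W_l\}_{0<l\leq Z}$ jointly detect every failure of relative dominance, so that if $\im(c^{l'}(Z)|_{\eca^{l',\mfl}})$ is a proper subvariety of $r^{-1}(\mfl)$ then it must be dominated by some $W_l$.
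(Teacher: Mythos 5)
First, a point of orientation: the paper does not actually prove Theorem~\ref{th:dominantrel} — it is recalled verbatim from \cite{R} — so there is no in-text proof to measure yours against; I can only judge the proposal on its own terms. Your Part (1) is correct and is the expected argument: nonemptiness is immediate from the definition of relative dominance, $r^{-1}(\mfl)$ is an affine space of dimension $h^1(\calO_Z)-h^1(\calO_{Z_1})$, Theorem~\ref{cor:smoothirreddim} gives the source dimension, and Lemma~\ref{lem:free} identifies the generic fibre dimension, forcing $h^1(Z,\calL)=h^1(Z_1,\mfl)$ on a dense open subset of $r^{-1}(\mfl)$.

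Part (2), however, has genuine gaps. The stratum $W_l\subset\eca^{l',\mfl}$ of ``divisors whose associated section vanishes along the full cycle $l$'' is empty as defined: an element of $\eca^{l'}(Z)$ has zero-dimensional support, so its tautological section has no fixed components by construction. What must be stratified is the locus of $\calL\in r^{-1}(\mfl)$ with $H^0(Z,\calL)\neq 0$, according to the fixed-part cycle $l$ of its sections; the $l$-stratum is then controlled by the Abel map of the smaller pair $(Z-l,(Z-l)_1)$ with the twist $\mfl|(Z-l)_1\otimes\calO_{(Z-l)_1}(-l)$, mapped back into $\pic^{l'}(Z)$. Relatedly, your dimension comparison is made in the wrong place: comparing the two \emph{source} dimensions $\dim W_l$ and $\dim\eca^{l',\mfl}$ does not reproduce the stated inequality, since $(l',Z)-(l'-l,Z-l)=(l',l)+(l,Z-l)$ whereas the theorem needs $\chi(-l'+l)-\chi(-l')=\chi(l)+(l',l)$, and the $h^1(\calO_{Z_1})$ versus $h^1(\calO_{(Z-l)_1})$ terms do not cancel either; the correct comparison is between the dimension of the \emph{image} of the $l$-stratum inside $r^{-1}(\mfl)$ and $\dim r^{-1}(\mfl)$, and computing that image dimension requires knowing $h^1$ of a generic element of the image of the smaller Abel map, which need not equal $h^1((Z-l)_1,\cdot)$ when that smaller map is not itself relatively dominant. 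Finally, the direction ``all inequalities hold $\Rightarrow$ dominance'' requires showing that a generic $\calL\in r^{-1}(\mfl)$ satisfies $h^0(Z,\calL)>h^0(Z-l,\calL(-l))$ for all $0<l\leq Z$, i.e.\ exactly the upper bounds on $h^0(Z-l,\calL(-l))$ appearing in the theorem that follows \ref{th:dominantrel} in the paper; these are established by an induction over cycles that your sketch does not supply. The skeleton (stratification by fixed parts plus the Euler-characteristic identity) is the right family of ideas, but the load-bearing steps are either misdefined or asserted where the actual estimates are needed.
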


\begin{theorem}
Fix $l'\in -\calS'$, $Z > 0$ , $0 \leq Z_1 \leq Z$ and  $\mfl \in \pic^{R_1(l')}(Z_1)$  as in Theorem \ref{th:dominantrel}. 
Then for any $\calL\in r^{-1}(\mfl)$ one has
\begin{equation*}\label{eq:genericLrel}
\begin{array}{ll}h^1(Z,\calL)\geq \chi(-l')-
\min_{0\leq l\leq Z,\ l\in L} \{\,\chi(-l'+l) -
 h^1((Z-l)_1, (\mfl | (Z-l)_1)  \otimes \calO_{(Z-l)_1}(-l) )\, \}, \\
h^0(Z,\calL)\geq \max_{0\leq l\leq Z,\, l\in L}
\{\,\chi(Z-l,\calL(-l))+   h^1((Z-l)_1, (\mfl | (Z-l)_1)  \otimes \calO_{(Z-l)_1}(-l) )\,\}.\end{array}\end{equation*}
Furthermore, if $\calL$ is generic in $r^{-1}(\mfl)$
then in both inequalities we have equalities and we have even the bit stronger statement, that $h^0(Z,\calL) =
\max_{0\leq l\leq Z,\, l\in L, H^0(Z-l, \calL(-l))_0 \neq \emptyset}\{\,\chi(Z-l,\calL(-l))+   h^1((Z-l)_1, (\mfl | (Z-l)_1)  \otimes \calO_{(Z-l)_1}(-l) )\,\}$.
\end{theorem}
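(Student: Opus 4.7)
The plan is to prove the two lower bounds via a direct short-exact-sequence argument and then establish equality for generic $\calL$ by induction on $Z$, iteratively reducing to the relatively dominant case from Theorem~\ref{th:dominantrel}(1). The $h^0$- and $h^1$-inequalities are equivalent under Riemann--Roch on $Z$ (for each fixed $l$ the expressions inside the $\max$ and inside the $\min$ differ only by the $l$-independent quantity $\chi(Z,\calL)$), so it suffices to establish the $h^0$-statement.

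For the lower bound, fix $l\in L$ with $0\leq l\leq Z$ and take global sections of the short exact sequence
\[
0\longrightarrow \calL(-l)\big|_{Z-l}\longrightarrow \calL\longrightarrow \calL\big|_{l}\longrightarrow 0
\]
to obtain $h^0(Z,\calL)\geq h^0(Z-l,\calL(-l))$. Riemann--Roch on $Z-l$ rewrites the right-hand side as $\chi(Z-l,\calL(-l))+h^1(Z-l,\calL(-l))$, and the further restriction $H^1(Z-l,\calL(-l))\twoheadrightarrow H^1\bigl((Z-l)_1,\mfl|_{(Z-l)_1}\otimes \calO_{(Z-l)_1}(-l)\bigr)$ is surjective because its cokernel sits in $H^2$ of a one-dimensional scheme and hence vanishes. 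Taking the maximum over $l$ yields the desired inequality.

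For the equality when $\calL$ is generic in $r^{-1}(\mfl)$, induct on $Z$. If $(l',\mfl)$ is relatively dominant, Theorem~\ref{th:dominantrel}(1) gives $h^1(Z,\calL)=h^1(Z_1,\mfl)$, which is the formula at $l=0$, and Theorem~\ref{th:dominantrel}(2) guarantees that $l=0$ is a minimizer. Otherwise Theorem~\ref{th:dominantrel}(2) produces some $l_0>0$ that strictly improves the bound, and by the definition of relative dominance a generic $\calL\in r^{-1}(\mfl)$ lies outside $\im(c^{l'}(Z))$, so every section of $\calL$ has a nonempty fixed part. Letting $l_0$ be the maximal effective integer cycle contained in the common fixed locus of all sections of $\calL$, the map $H^0(Z,\calL)\to H^0(l_0,\calL|_{l_0})$ vanishes, so $h^0(Z,\calL)=h^0(Z-l_0,\calL(-l_0))$; by construction the residual $\calL(-l_0)|_{Z-l_0}$ acquires sections with no further fixed components, so the side condition $H^0(Z-l,\calL(-l))_0\neq\emptyset$ of the stronger statement is automatic for $l=l_0$. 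Since tensoring by $\calO(-l_0)$ translates the relevant Picard fiber, $\calL(-l_0)|_{Z-l_0}$ remains generic in $r^{-1}\bigl(\mfl|_{(Z-l_0)_1}\otimes\calO_{(Z-l_0)_1}(-l_0)\bigr)$, so the induction hypothesis on $Z-l_0$ closes the argument.

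The hard step I anticipate is showing that the maximal base cycle $l_0$ of a generic $\calL$ is actually a minimizer of the combinatorial function $l\mapsto\chi(-l'+l)-h^1\bigl((Z-l)_1,\mfl|_{(Z-l)_1}(-l)\bigr)$: Theorem~\ref{th:dominantrel}(2) only produces some strictly improving $l_0$, so to match the formula one must bootstrap this into a genuine minimizer, ruling out the possibility that the true base cycle of generic $\calL$ is a strict sub-cycle of the combinatorial minimizer. I expect this to be handled by combining the Minkowski-sum dimension description of the images $\im(c^{-nl'}(Z))$ and the affine-hull subspaces $V_I(Z)$ from \cite{NNA1} with a tangent-map analysis of the restricted Abel map $c^{l'}(Z)|_{\eca^{l',\mfl}}$ in the spirit of Theorem~\ref{th:CLB1}(I)(c), now adapted to the relative setting of Theorem~\ref{cor:smoothirreddim}.
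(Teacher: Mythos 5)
A preliminary caveat: the paper does not prove this theorem; it is only \emph{recalled} here from \cite{R} (the whole section is an unproved summary), so I can only judge your argument on its own terms rather than against an in-paper proof.

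Much of your architecture is sound. The equivalence of the two inequalities under Riemann--Roch is correct (one checks $\chi(Z,\calL)-\chi(Z-l,\calL(-l))=(l',l)+\chi(l)=-\bigl(\chi(-l')-\chi(-l'+l)\bigr)$, consistent with the Remark following Theorem \ref{relgen}), and the lower bound via $H^0(Z-l,\calL(-l))\hookrightarrow H^0(Z,\calL)$, Riemann--Roch on $Z-l$, and surjectivity of $H^1(Z-l,\calL(-l))\to H^1((Z-l)_1,\cdot)$ (obstruction in an $H^2$ on one-dimensional support) is correct. Also, the step you flag as ``hard'' is in fact automatic: once the recursion exhibits $h^0(Z,\calL)$ as \emph{one} term of the maximum (the term at the accumulated base cycle), the already-established lower bound says $h^0(Z,\calL)$ is at least the \emph{whole} maximum, so that term is forced to be maximal; no separate identification of the base cycle with a combinatorial minimizer is needed. (Note also that you use the symbol $l_0$ both for the cycle produced by Theorem \ref{th:dominantrel}(2) and for the base cycle of the generic $\calL$; only the latter actually enters your recursion.)

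The genuine gap is the sentence asserting that $\calL(-l_0)|_{Z-l_0}$ ``remains generic'' in $r^{-1}\bigl(\mfl|_{(Z-l_0)_1}\otimes\calO_{(Z-l_0)_1}(-l_0)\bigr)$. The map $\calL\mapsto\calL(-l_0)|_{Z-l_0}$ sends the fiber $r^{-1}(\mfl)$, a torsor over $K_Z=\ker\bigl(H^1(\calO_Z)\to H^1(\calO_{Z_1})\bigr)$, into a torsor over $K_{Z-l_0}=\ker\bigl(H^1(\calO_{Z-l_0})\to H^1(\calO_{(Z-l_0)_1})\bigr)$, and its image is only a translate of the image of $K_Z\to K_{Z-l_0}$. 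Unless that linear map is surjective, your generic $\calL$ lands at a generic point of a \emph{proper} affine subspace of the smaller fiber, where the induction hypothesis (stated for line bundles generic in the full fiber) does not apply, and where $h^0$ can genuinely exceed the generic value of the full fiber --- which would destroy the upper bound. This surjectivity is a nontrivial cohomological claim: a diagram chase reduces it to surjectivity of $H^1$ of the ideal sheaf of $Z-l_0$ in $Z$ onto $H^1$ of the ideal sheaf of $(Z-l_0)_1$ in $Z_1$, and the relevant quotient sheaves are supported on curves rather than points, so nothing vanishes for free. You give no argument here, and this is where the real work of the theorem lies. A smaller omission of the same kind: the Chern class $l'-l_0$ of the sub-problem need not lie in $-\calS'$, so you must either verify the induction hypothesis is invoked within its stated range or reformulate the induction so that only the restricted Chern class matters.
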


In the following we recall the results from \cite{R} about relatively generic analytic structures:

Let's fix a a topological type, so a resolution graph $\mathcal{T}$ with vertex set $\calv$.

We consider a partition $\calv = \calv_1 \cup  \calv_2$ of the set of vertices $\calv=\calv(\mathcal{T})$. They define
two (not necessarily connected) subgraphs $\mathcal{T}_1$ and $\mathcal{T}_2$.
We call the intersection of an exceptional divisor from
$ \calv_1 $ with an exceptional divisor from  $ \calv_2 $ a
{\it contact point}. 

For any $Z\in L=L(\mathcal{T})$ we write $Z=Z_1+Z_2$,
where $Z_i\in L(\mathcal{T}_i)$ is
supported in $\mathcal{T}_i$ ($i=1,2$).
Furthermore, parallel to the restriction
$r_i : \pic(Z)\to \pic(Z_i)$ one also has the (cohomological) restriction operator
  $R_i : L'(\mathcal{T}) \to L_i':=L'(\mathcal{T}_i)$
(defined as $R_i(E^*_v(\mathcal{T}))=E^*_v(\mathcal{T}_i)$ if $v\in \calv_i$, and
$R_i(E^*_v(\mathcal{T}))=0$ otherwise).

For any $l'\in L'(\mathcal{T})$ and any $\calL\in \pic^{l'}(Z)$ it satisfies:

\begin{equation*}\label{eq:CHERNREST}
c_1(r_i(\calL))=R_i(c_1(\calL)) | L'_{|Z_1|}.
\end{equation*}

In the following for the sake of simplicity we will denote $r = r_1$ and $R = R_1$.

Furthermore let's have a fixed analytic type $\tX_1$ for the subtree $\mathcal{T}_1$ (if it is disconnected, then an analytic type for each connected component).

Also for each vertex $v_2 \in \calv_2$ which has got a neighbour $v_1$ in $\calv_1$ we fix a cut $D_{v_2}$ on $\tX_1$, along we glue the exceptional divisor $E_{v_2}$.
This means, that $D_{v_2}$ is a divisor, which intersects the exceptional divisor $E_{v_1}$ transversally in one point and we will glue the exceptional divisor $E_{v_2}$ in a way, 
such that $E_{v_2} \cap \tX_1$ equals $D_{v_2}$.

If for some vertex $v_2 \in \calv_2$, which has got a neighbour in $\calv_1$ we don't say explicitely what is the fixed cut, then it should be understood in the way, that we glue the exceptional divisor $E_{v_2}$ along a generic cut.

Let's plumb the tubular neihgbourhoods of the exceptional divisors $E_{v_2}, v_2 \in \calv_2$  with the above conditions generically to the fixed resolution $\tX_1$, now we get a big singularity $\tX$ and we say that $\tX$ is a relatively generic singularity corresponding to the analytical structure $\tX_1$ and the cuts $D_{v_2}$, for the precise explanation of genericity look at \cite{R}.

We have the following theorem with this setup from \cite{R}:

\begin{theorem}\label{relgen}
Let's have the setup as above, so two resolution graphs $\mathcal{T}_1 \subset \mathcal{T} $ with vertex sets $\calv_1 \subset \calv$, where $\calv = \calv_1 \cup \calv_2$  and a fixed singularity $\tX_1$ for the resolution graph $\mathcal{T}_1$, and cuts $D_{v_2}$ along we glue $E_{v_2}$ for all vertices $v_2 \in \calv_2$, which has got a neighbour in $\calv_1$.

Now let's assume that $\tX$ has a relatively generic analytic stucture on $\mathcal{T}$ corresponding to $\tX_1$.

Furthermore let's have an effective cycle $Z$ on $\tX$ and let's have $Z = Z_1 + Z_2$, where $|Z_1| \subset \calv_1$ and $|Z_2| \subset \calv_2$.

1) 
Let's have a natural line bundle $\calL$ on $\tX$, such that $c_1(\calL)= l' = - \sum_{v \in \calv} a_v E_v$, with $a_v > 0, v \in \calv_2 \cap |Z|$, and let's denote $c_1 (\calL | Z) = l'_ m$, furthermore let's denote $\mfl = \calL | Z_1$, then we have the following:

We have $H^0(Z,\calL)_0\not=\emptyset$ if and only if $(l',\mfl)$ is relative dominant on the cycle $Z$ or equivalently:

\begin{equation*}
\chi(-l')- h^1(Z_1, \mfl) < \chi(-l'+l)-  h^1((Z-l)_1, (\mfl | (Z-l)_1)  \otimes \calO_{(Z-l)_1}(-l) ),
\end{equation*}
for all $0 < l \leq Z$.

2) 

Let's have a natural line bundle $\calL$ on $\tX$, such that $c_1(\calL)= l' = - \sum_{v \in \calv} a_v E_v$, with $a_v > 0, v \in \calv_2 \cap |Z|$, and let's denote $c_1 (\calL | Z) = l'_ m$, furthermore let's denote $\mfl = \calL | Z_1$, then we have the following:

\begin{equation*}
h^1(Z, \calL) =  h^1(Z, \calL_{gen}),                                   
\end{equation*}
where $\calL_{gen}$ is a generic line bundle in $ r^{-1}(\mfl) \subset \pic^{l'_m}(Z)$, or equivalently:

\begin{equation*}
h^1(Z, \calL)= \chi(-l') - \min_{0 \leq l \leq Z}(\chi(-l'+l)-  h^1((Z-l)_1, (\mfl | (Z-l)_1)  \otimes \calO_{(Z-l)_1}(-l) )).
\end{equation*}

3) 

Let's have a natural line bundle $\calL$ on $\tX$, such that $c_1(\calL)= l' = - \sum_{v \in \calv'} a_v E_v$, and assume, that  $a_v \neq 0$ if $ v \in \calv_2 \cap |Z|$.
Let's denote $c_1 (\calL | Z) = l'_ m$, furthermore let's denote $\mfl = \calL | Z_1$.

Assume, that $H^0(Z,\calL)_0 \not=\emptyset$, and pick an arbitrary $D \in (c^{l'_m}(Z))^{-1}\calL \subset \eca^{l'_m, \mfl}$.
Then $ c^{l'_m}(Z) : \eca^{l'_m, \mfl} \to r^ {-1}(\mfl  )$ is a submersion in $D$, and $h^1(Z,\calL) = h^1(Z_1, \mfl)$.

In particular the map $ c^{l'_m}(Z) :  \eca^{l'_m, \mfl} \to r^ {-1}(\mfl  )$ is dominant, which means $(l'_m,\mfl)$ is relative dominant on the cycle $Z$, or equivalently:

\begin{equation*}
\chi(-l')- h^1(Z_1, \mfl) < \chi(-l'+l)-  h^1((Z-l)_1, (\mfl | (Z-l)_1)  \otimes \calO_{(Z-l)_1}(-l) )),
\end{equation*}
for all $0 < l \leq Z$.
\end{theorem}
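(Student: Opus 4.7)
The plan is to exploit the fact that, with $\tX_1$ and the cuts $D_{v_2}$ fixed, the space of admissible analytic gluings for the $\calv_2$--neighborhoods forms a variety, a generic point of which is, by hypothesis, our $\tX$. As the gluing varies in this family, the restriction $\mfl = \calO_{\tX}(l')|_{Z_1}$ does not move (it is determined by $\tX_1$ and the cuts alone), while the full restriction $\calO_Z(l')$ traces a subvariety of the fiber $r^{-1}(\mfl) \subset \pic^{l'_m}(Z)$. I would first isolate the \emph{core claim}: under the positivity assumption $a_v > 0$ for all $v \in \calv_2 \cap |Z|$, this tracing is in fact dominant onto $r^{-1}(\mfl)$, so that for the relatively generic $\tX$ in the statement, $\calO_Z(l')$ is a generic point of $r^{-1}(\mfl)$.

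To establish the core claim, I would compute, in \v{C}ech cocycle terms, the differential of the map $\Phi \colon \{\text{gluings}\} \to r^{-1}(\mfl)$ at the given relatively generic structure. A first--order perturbation of the gluing along a vertex $v \in \calv_2 \cap |Z|$ is encoded by a cocycle representing a class $[\xi_v] \in H^1(\calO_Z)$ supported on $E_v$, hence lying in $\ker r$; the defining property of $\calO_{\tX}(l')$ as the unique group--theoretic section of $c_1$ extending $l \mapsto \calO_{\tX}(l)$ on integer cycles forces the induced deformation of $\calO_Z(l')$ to be $a_v \cdot [\xi_v]$. As $v$ ranges over $\calv_2 \cap |Z|$, these classes span $\ker r \cong T_{\calO_Z(l')}\, r^{-1}(\mfl)$, and positivity of the $a_v$ prevents any direction from being killed, so the differential is surjective and $\Phi$ is dominant.

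With the core claim in hand, all three parts fall out of the relative Abel theory already summarized. For part (2), since $\calO_Z(l') = \calL$ is a generic point of $r^{-1}(\mfl)$, the generic--line--bundle theorem stated just before Theorem \ref{relgen} yields directly the displayed formula for $h^1(Z,\calL)$. For part (1), Lemma \ref{lem:H_0} identifies $H^0(Z,\calL)_0 \neq \emptyset$ with $\calL \in \im(c^{l'_m}(Z))$; genericity of $\calL$ in $r^{-1}(\mfl)$ upgrades this to the statement that the closure of $r^{-1}(\mfl)\cap \im(c^{l'_m}(Z))$ equals $r^{-1}(\mfl)$, i.e.\ the definition of relative dominance, whose combinatorial form is then supplied by Theorem \ref{th:dominantrel}. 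For part (3), the weaker hypothesis $a_v \neq 0$ already suffices for the differential of $\Phi$ computed above to surject onto $\ker r$ at our fixed $\tX$ (we only need infinitesimal surjectivity at one point, not global dominance), so once $H^0(Z,\calL)_0 \neq \emptyset$ furnishes some $D \in \eca^{l'_m,\mfl}$, a dimension count combining Theorem \ref{cor:smoothirreddim} with Lemma \ref{lem:free} forces the restricted Abel map to be a submersion at $D$ and forces $h^1(Z,\calL) = h^1(Z_1,\mfl)$.

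The main obstacle will be the core differential computation: making rigorous the assertion that a \v{C}ech--cocycle perturbation of the gluing at $E_v$ enters $\calO_{\tX}(l')$ with coefficient exactly $a_v$. Since the natural line bundle is characterized not by a local formula but as the unique homomorphic section of $c_1$, one must unwind this group--theoretic characterization under the deformation, propagating it through the restriction to $Z$; verifying that positivity (and, for part (3), mere nonzeroness) of the $a_v$ translates into the required surjectivity of the tangent map is the technical heart of the argument.
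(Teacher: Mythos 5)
First, a point of context: the paper does not prove Theorem \ref{relgen} at all --- it is imported verbatim from the companion paper [NR] (``Invariants of relatively generic structures\dots''), so there is no in-text proof to compare against. That said, your overall architecture does match the strategy of that reference: realize $\calO_Z(l')$ as a relatively generic point of the fiber $r^{-1}(\mfl)$ by varying the gluing data with $\tX_1$ and the cuts fixed, and then read parts (1)--(3) off the relative Abel-map package (Theorem \ref{cor:smoothirreddim}, Theorem \ref{th:dominantrel}, Lemma \ref{lem:H_0}, Lemma \ref{lem:free}). That reduction is the right skeleton.

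The gap is where you yourself locate it --- the differential of $\Phi$ --- but it is worse than an omitted computation, because your sketch of it is internally inconsistent. (i) One perturbation class $[\xi_v]$ per vertex $v\in\calv_2\cap|Z|$ cannot span $\ker r=\ker\bigl(H^1(\calO_Z)\to H^1(\calO_{Z_1})\bigr)$, whose dimension in general far exceeds $\#(\calv_2\cap|Z|)$; one needs the full Laufer-type space of gluing perturbations and an explicit duality computation (pairing deformation classes against forms in $H^0(\Omega^2_{\tX}(Z))/H^0(\Omega^2_{\tX})$) to control the image of $d\Phi$. (ii) More seriously, you claim that $a_v\neq 0$ already makes $d\Phi$ surject onto $\ker r$ at the given $\tX$. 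Since the parameter space of gluings is irreducible (essentially an affine space of plumbing data), surjectivity of $d\Phi$ at one point implies $\Phi$ is dominant, hence $\calL$ would be a generic point of $r^{-1}(\mfl)$ and parts (1) and (2) would follow under $a_v\neq 0$ alone. But the paper insists (already in the absolute case, Theorem \ref{th:CLB1} and the surrounding discussion) that without the sign condition the natural line bundle is emphatically not generic in its Picard fiber. So either your differential computation secretly needs positivity (in which case your argument for part (3) collapses), or it does not (in which case you are claiming a strengthening of (1)--(2) that contradicts the paper's own warnings); you cannot have both without actually performing the computation, which is precisely the technical heart you defer.

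A secondary problem: your derivation of part (3) from ``infinitesimal surjectivity of $d\Phi$ at one point plus a dimension count'' does not go through. The submersion statement in part (3) concerns the tangent map of the Abel map $c^{l'_m}(Z):\eca^{l'_m,\mfl}\to r^{-1}(\mfl)$ at the specific divisor $D$, and comparing $\dim\eca^{l'_m,\mfl}=h^1(Z_1,\mfl)-h^1(\calO_{Z_1})+(l',Z)$ with the fiber dimension $(l',Z)+h^1(Z,\calL)-h^1(\calO_Z)$ from Lemma \ref{lem:free} shows that submersivity at $D$ is \emph{equivalent} to $h^1(Z,\calL)=h^1(Z_1,\mfl)$; a dimension count cannot establish it, since that equality is exactly the nontrivial conclusion. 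The actual mechanism must be a direct computation of $T_Dc^{l'_m}(Z)$ (via the Laufer pairing of $D$ against forms, using the genericity of the cuts and $H^0(Z,\calL)_0\neq\emptyset$), not a consequence of the dominance of $\Phi$.
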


\begin{remark}

In the theorem above in any formula one can replace $l'$ with $l'_m$, since for every $0 \leq l \leq Z$ one has $\chi(-l')- \chi(-l'+l) = \chi(-l'_m)- \chi(-l'_m+l) = -(l', l) - \chi(l)$.

\end{remark}

\section{cohomology of natural line bundles on generic singularities}

In the following we aim to compute all the cohomology numbers of restrictions of natural line bundles $h^1(\calO_Z(Z'))$, where $Z$ is an arbitrary integer effective cycle and $Z'\in L'$ is an arbitrary Chern class on a generic singularity with resolution $\tX$ correspongind to the fixed resolution graph $\mathcal{T}$.

Notice, that computing the dimensions $d_{Z, l'}$ is a special case of computing these numbers as explained in the introduction, however in this general situation we will not get an explicit expression, just a combinatorial algorithm which computes these numbers.

We start the discussion with some lemmas:

First we have the following possibly folklore lemma, however for the sake of completeness we prove it.

\begin{lemma}
Let $C$ be a probably non compact, irreducible, smooth complex curve, and let's have an analytic map $ f : C \to \bC^N$ for some integer $N \geq 0$, such that the affine hull
of the image $f(C)$ is the whole affine space $\bC^N$.

Let's look at the map $ f^d : S^d(C) \to \bC^N$, where $S^d(C)$ is the $d$-fold symmetric product, which is defined by $f^d(p_1, \cdots p_d) = \sum_{1 \leq i \leq d} f(p_i)$.

Now, if $p$ is a generic point of $C$, then the rank of the map $f^d$ in $d \cdot (p)$ is $ \min(d, N)$, which is equal to the rank of $ f^d$ in a generic point of $ S^d(C)$.
\end{lemma}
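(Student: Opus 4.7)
The plan is to compute the differential of $f^d$ explicitly at both types of points and then to translate the affine-hull hypothesis into a statement about osculating spaces.

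First I would recall that for a smooth curve $C$ the symmetric product $S^d(C)$ is a smooth variety of dimension $d$. At a point $D=(p_1,\ldots,p_d)$ with all $p_i$ distinct, the symmetric group acts freely, so $S^d(C)$ is locally $C^d$ and the tangent space is $\bigoplus_{i=1}^{d} T_{p_i}C$. The differential of $f^d$ at $D$ is therefore $(v_1,\ldots,v_d)\mapsto \sum_i df_{p_i}(v_i)$, whose image is the linear subspace $\mathrm{span}(f'(p_1),\ldots,f'(p_d))\subset \bC^N$.

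Next I would handle the diagonal point $d\cdot(p)$. Choose a local coordinate $t$ at $p$ on $C$; then the elementary symmetric functions $e_1,\ldots,e_d$ of the $t$-coordinates of the $d$ (possibly colliding) points form a local coordinate system on $S^d(C)$ near $d\cdot(p)$, and equivalently so do the power sums $p_k=\sum_i t_i^k$ for $k=1,\ldots,d$. Using the Taylor expansion $f(t)=f(p)+\sum_{k\geq 1}\tfrac{t^k}{k!}f^{(k)}(p)$ one obtains
\begin{equation*}
f^d(t_1,\ldots,t_d) \;=\; d\,f(p) \;+\; \sum_{k\geq 1}\frac{p_k}{k!}\,f^{(k)}(p).
\end{equation*}
By Newton's identities, for $k>d$ the power sum $p_k$ is a polynomial in $p_1,\ldots,p_d$ whose vanishing order at the origin is at least $2$. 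Hence the linear part of $f^d$ in the coordinates $(p_1,\ldots,p_d)$ is $(\delta p_1,\ldots,\delta p_d)\mapsto \sum_{k=1}^{d}\tfrac{\delta p_k}{k!}\,f^{(k)}(p)$, and the image of $df^d$ at $d\cdot(p)$ equals $\mathrm{span}(f'(p),f''(p),\ldots,f^{(d)}(p))$.

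It remains to show that both spans have dimension $\min(d,N)$ generically. For the osculating span, note that if for a generic $p$ all vectors $\{f^{(k)}(p)\}_{k\geq 1}$ lay in a proper linear subspace $W\subsetneq\bC^N$, then any linear functional $\lambda$ vanishing on $W$ would make $\lambda(f(q)-f(p))$ a holomorphic function on $C$ all of whose derivatives at $p$ vanish; by analyticity $\lambda(f)$ is constant on $C$, contradicting the affine-hull hypothesis. Hence generically $\mathrm{span}_{k\geq 1}f^{(k)}(p)=\bC^N$. Moreover, if generically $\dim\mathrm{span}(f'(p),\ldots,f^{(k)}(p))=\dim\mathrm{span}(f'(p),\ldots,f^{(k+1)}(p))$, then differentiating a linear dependence once shows the same equality holds for $k+2$, so by iteration the osculating space never grows beyond dimension $k$; thus the dimensions $r_k$ strictly increase until they reach $N$, forcing $r_k=\min(k,N)$ generically. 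This gives rank $\min(d,N)$ at $d\cdot(p)$. For the distinct-point case one proceeds by induction: $f'(C)$ has linear span $\bC^N$ (otherwise $f(C)$ lies in a proper affine translate), so having fixed generic $p_1,\ldots,p_{k-1}$ yielding a $(k-1)$-dimensional span with $k-1<N$, for generic $p_k$ the vector $f'(p_k)$ is not in that span, which yields linear independence for $d\leq N$ and rank $N$ for $d\geq N$.

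The main obstacle is the first reduction: identifying the correct local coordinates on $S^d(C)$ at the maximally degenerate point $d\cdot(p)$ and verifying via Newton's identities that higher power sums contribute only quadratically, so that the differential cleanly picks up exactly the first $d$ derivatives of $f$. Once this is done, the rest is an osculation-dimension bookkeeping that follows directly from the affine-hull hypothesis.
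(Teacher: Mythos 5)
Your proof is correct and follows essentially the same route as the paper: local coordinates on $S^d(C)$ at the diagonal point $d\cdot(p)$ built from symmetric functions (the paper uses elementary symmetric polynomials, you use power sums, which agree to first order) reduce the claim to the generic linear independence of $f'(p),\dots,f^{(d)}(p)$, which both arguments then deduce from the affine-hull hypothesis. The only real divergence is in that last step --- the paper bounds the solution space of a linear ODE of order $d-1$, while you argue that a stationary osculating flag would trap $f(C)$ in a proper affine subspace --- and both versions are valid.
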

\begin{proof}
The statement, that the rank of $ f^d$ in a generic point of $ S^d(C)$ is $ \min(d, N)$ is clear, since the dimension of $f^d( S^d(C))$ is $ \min(d, N)$.

Indeed, we have $f^d( S^d(C)) = \oplus_{1 \leq i \leq d} f(C)$, from which we immediately get that $\dim(f^d( S^d(C))) \leq  \min(d, N)$.

On the other hand, since the  affine hull of the image $f(C)$ is the whole affine space $\bC^N$, we can find $j = \min(d, N)$ different generic smooth points $p_1, \cdots, p_j \in f(C)$,
such that the tangent maps $T_{p_i}(f(C))$ are linearly independent.
 We get that the tangent space of $S^d(C)$ at its generic point $\sum_{1 \leq i \leq j}p_i$ is at least $j$ dimensional, so we get indeed $\dim(f^d( S^d(C))) = \min(d, N)$.

Observe, that it's enough to prove the statement when $d \leq N$, and in this case we have to prove, that the tangent map of the map $f^d$ in $d \cdot (p)$ is injective.

Indeed if $d > N$ and $p \in C$ is a generic point, then we have the map $g : S^N(C) \to S^d(C)$ given by $g(x_1, \cdots, x_N) = ((d-N) \cdot (p), x_1, \cdots, x_N)$ and
since we know, that the tangent map of the map $f^N : S^N(C) \to \bC^N$ is surjective, we indeed get, that the tangent map of the map $f^d : S^N(C) \to \bC^N$ is also surjective.

Now let $p$ be a generic point of $C$ and let $z$ be a local holomorphic coordinate on $C$ near $z$.

Let $(z_1, \cdots z_d)$ be the corresponding holomorphic coordinates on $C^d$ near $(p, p, ..., p)$, then the elementary symmetric polynomials in $(z_1, \cdots z_d)$, $\sigma_1, \sigma_2, ..., \sigma_d$ are holomorphic coordinates on $S^d(C)$.

Write the map $f: C \to \bC^N$ locally like $f(z) = (f_1(z), \cdots f_N(z))$, where the functions $f_i$ are holomorphic.

We can write the symmetric funtions $f_i(z_1) + \cdots + f_i(z_d) = g_i(\sigma_1,  \sigma_2, ..., \sigma_d)$ in terms of the elementary symmetric polynomials.

Observe, that if $i \leq d$, then $ \sum_{1 \leq j \leq d} z_j^i = a_i \sigma_i + G_i(\sigma_1,  \sigma_2, ..., \sigma_d)$, where $a_i$ is a nonzero constant, and $G_i$ hasn't got linear terms in the variables $\sigma_1,  \sigma_2, ..., \sigma_d$.

It means exactly, that the tangent map $T_{d \cdot (p)}S^d(C) \to T_{f^d(d \cdot (p))}\bC^N$ is injective, if the vectors $\frac{\delta}{\delta_z}f, \cdots (\frac{\delta}{\delta_z})^d f$ are linearly independent.

So we have to prove, that in a generic point $p \in C$ the vectors $\frac{\delta}{\delta_z}f, \cdots (\frac{\delta}{\delta_z})^d f$ are linearly independent.

Assume to the contrary, that there are holomorphic functions $l_1, \cdots l_d$ on an open subset $U \subset C$, such that $\sum_{1 \leq i \leq d} l_i(z) \cdot (\frac{\delta}{\delta_z})^i f = 0$, such that not all of the functions $l_1, \cdots l_d$ are constant $0$.

It means, that for every $1 \leq j \leq N$ one has $\sum_{1 \leq i \leq d} l_i(z) \cdot (\frac{\delta}{\delta_z})^i f_j = 0$.

It means, that $\frac{\delta}{\delta_z} f_j$ are solutions of a $d-1$ order homogenous differential equtaion, however such a linear homogenous differential equation can have at most $d-1$ linearly independent solutions locally.
This means, that, there exist constants $b_1, \cdots b_N \in \bC$, such that $\sum_{1 \geq i \geq N} b_i \frac{\delta}{\delta_z} f_j = 0$ on $U$, however this means, that for every smooth point $q \in f(C)$ one has $T_p(f(C)) \in H$, where H is a fixed hyperplane in $\bC^N$.

However this means, that $f(C)$ is contained in an affine hyperplane parallel to $H$, which contradicts the fact, that the affine hull of $f(C)$ is $\bC^N$.
\end{proof}

Now let's prove the following key lemma:

\begin{lemma}
Let $\mathcal{T}$ be an arbitrary resolution graph, and let's have a generic singularity with resolution $\tX$ corresponding to it, $I \subset \calv$ an arbitrary subset and $Z \geq E$ an effective cycle, such that $Z_v = 1$ for all $v \in I$.

For all vertices $v \in I$ let's have integers $r_v, n_v \geq 0$, $r_v + n_v > 0$, and rational numbers $a_{v,1}, \cdots , a_{v, r_v} > 0$ ,  $b_{v, 1}, \cdots , b_{v, n_v} > 0$, $a_v =  \sum_{1 \leq i \leq r_v} a_{v, i}$, $b_v =  \sum_{1 \leq i \leq n_v} b_{v, i}$, such that with the notation $m_v = a_v - b_v$ we have $m_v \geq 0$ and $m_v \in \bZ$.

Let's denote $l' = \sum_{v \in I} -m_v E_v^*$  and let's have $d_{Z, l'} = \dim(\im(c^{l'}(Z)))$.

Let's have the subset $I' \subset I$ consisting of vertices $v$, such that $ \dim(V_v(Z) \oplus (\im(c^{l'}(Z))) = \dim(\im(c^{l'}(Z))$ holds, or $n_v = 0$ and $a_{v, i} \in \bZ$ holds for all $1 \leq i \leq r_v$.

Pick generic ponts on the exceptional divisor $E_v$, $p_{v, 1}, \cdots ,p_{v, r_v}$ and $q_{v, 1}, \cdots ,q_{v, n_v}$ for all $v \in I$, and let's denote the divisor $D =  \sum_{v \in I, 1 \leq i \leq r_v} a_{v, i} p_{v, i} -   \sum_{v \in I, 1 \leq i \leq n_v} b_{v, i} q_{v, i}$ with rational coefficients, and the rational line bundle ( which is in fact also an ordinary line bundle in this case) associated to it by $\calO_{Z}(D)$ where $\calO_{Z}(D) \in  \pic^{l'}(Z)$.

1)  With the notations above $h^1(\calO_{Z}(D)) \leq h^1(Z) - d_{Z, l'}$, where the upper bound is the $h^1$ of a generic line bundle in $\im(c^{l'}(Z))$.

2) If $I = I'$, then $\calO_{Z}(D) \in  \overline{ \im(c^{l'}(Z))}$ and $h^1(\calO_{Z}(D)) = h^1(Z) - d_{Z, l'}$.

3) If $I \neq I'$, then $\calO_{Z}(D) \notin  \im(c^{l'}(Z))$.
\end{lemma}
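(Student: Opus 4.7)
The plan is to locate $\calO_Z(D)$ inside an affine subspace $B \subset \pic^{l'}(Z)$ whose direction is $\sum_{v \in I} V_v(Z)$, observe that the genericity of the $p_{v,i}, q_{v,i}$ makes $\calO_Z(D)$ a generic point of $B$, and then compare $B$ with $\overline{\im(c^{l'}(Z))}$ case by case.

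First I locate $\calO_Z(D)$ in $B$ as follows. For each $v\in I$, the map $E_v \to \pic^{-E_v^*}(Z)$, $p \mapsto \calO_Z(p)$, has image whose affine hull has linear direction $V_v(Z)$: this follows from the definition of $V_v(Z)$ combined with the preceding folklore lemma, which matches the two dimensions. Choosing a common denominator $N$ with $Na_{v,i}, Nb_{v,i} \in \bZ$, the line bundle $\calO_Z(ND_v) = \calO_Z(\sum_i Na_{v,i}p_{v,i}) \otimes \calO_Z(\sum_i Nb_{v,i}q_{v,i})^{-1}$ is the Minkowski difference of two elements, each in an affine subspace parallel to $V_v(Z)$, hence lies in an affine subspace of $\pic^{-Nm_v E_v^*}(Z)$ parallel to $V_v(Z)$. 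The $N$-th tensor power map $\pic^{-m_v E_v^*}(Z) \to \pic^{-Nm_v E_v^*}(Z)$ is a bijection of affine spaces whose linear part is multiplication by $N$ on the common model $H^1(\calO_Z)$; since $V_v(Z) \subset H^1(\calO_Z)$ is scalar-stable, the unique $N$-th root places $\calO_Z(D_v)$ in an affine subspace $B_v \subset \pic^{-m_v E_v^*}(Z)$ parallel to $V_v(Z)$. Tensoring over $v$ puts $\calO_Z(D) \in B := \bigotimes_{v\in I} B_v$, parallel to $\sum_{v\in I} V_v(Z)$. Genericity of the chosen points makes $\calO_Z(D)$ a generic point of $B$ for upper semi-continuous invariants, so $h^1(\calO_Z(D))= \min_{\calL \in B} h^1(\calL)$.

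For Part~(2), assume $I=I'$. Let $W \subset H^1(\calO_Z)$ be the direction of the affine hull of $\overline{\im(c^{l'}(Z))}$. At every $v\in I$, either condition (a) gives $V_v(Z) \subset W$, or condition (b) gives that $D_v$ is an effective integer divisor so $\calO_Z(D_v)\in\im(c^{-m_v E_v^*}(Z))$. Combining vertex by vertex I produce a common point $\calL_0\in B\cap \overline{\im(c^{l'}(Z))}$; since $B$ and $\overline{\im(c^{l'}(Z))}$ are parallel-direction affine subspaces with $\sum_{v\in I} V_v(Z)\subset W$ sharing $\calL_0$, it follows that $B\subset \overline{\im(c^{l'}(Z))}$. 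Then the generic value of $h^1$ on $B$ coincides with the generic value on $\overline{\im(c^{l'}(Z))}$, which by Lemma~\ref{lem:free} equals $h^1(Z)-d_{Z,l'}$; together with the previous paragraph this gives $h^1(\calO_Z(D))=h^1(Z)-d_{Z,l'}$. Part~(1) follows similarly even when $I\neq I'$: one constructs an auxiliary $\calL_0\in B$ with $h^1(\calL_0)\leq h^1(Z)-d_{Z,l'}$ by replacing the rational-coefficient $D_v$ at the problematic vertices by effective integer divisors along the $V_v(Z)$-directions and landing in $\overline{\im(c^{l'}(Z))}$; semi-continuity then propagates the bound to the generic point $\calO_Z(D)$ of $B$.

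For Part~(3), pick $v_0\in I\setminus I'$: then $V_{v_0}(Z)\not\subset W$ and condition (b) also fails at $v_0$. Suppose for contradiction $\calO_Z(D)\in \im(c^{l'}(Z))$, so $\calO_Z(D)=\calO_Z(\tilde D)$ for some $\tilde D\in \eca^{l'}(Z)$. Using the decomposition $B=\bigotimes_{v\in I} B_v$ to isolate the $v_0$-contribution and matching with the $E_{v_0}$-component of $\tilde D$, the line bundle $\calO_Z(D_{v_0})$ would be forced to lie in the image of the Abel map on effective integer divisors supported on $E_{v_0}$ with Chern class $-m_{v_0}E_{v_0}^*$; this is precluded by the failure of condition (b) together with $V_{v_0}(Z)\not\subset W$, yielding the contradiction. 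The main obstacle will be making this last step rigorous, i.e.\ formalising how the affine subspace description of $B$ and the $W$-constraint imposed by $\tilde D\in \eca^{l'}(Z)$ combine to produce a clean contradiction at $v_0$.
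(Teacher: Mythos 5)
There is a genuine gap, and it sits at the heart of your argument for parts (1) and (2). You place $\calO_Z(D)$ in the affine subspace $B$ spanned by the per-vertex contributions and then treat it as a \emph{generic point of $B$}, so that semicontinuity lets you import an upper bound on $h^1$ from any auxiliary $\calL_0\in B$. But the set that $\calO_Z(D)$ actually sweeps out as the points $p_{v,i},q_{v,i}$ vary is the Minkowski sum $M=\bigoplus_{v,i}a_{v,i}\cdot \im(c^{-E_v^*}(Z))\oplus\bigoplus_{v,i}(-b_{v,i})\cdot \im(c^{-E_v^*}(Z))$, which is an irreducible constructible set whose \emph{affine hull} is parallel to $\sum_v V_v(Z)$ but which is in general a proper subvariety of that hull. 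Genericity of the points makes $\calO_Z(D)$ generic in $M$, not in $B$, so semicontinuity only transfers bounds from points of $M$. The auxiliary $\calL_0$ you build by ``replacing the rational-coefficient $D_v$ by effective integer divisors along the $V_v(Z)$-directions'' need not lie in $M$, and the one natural point of $M$ to use --- the total collision $\calO_Z(\sum_v m_v p_v)$ with $p_v$ generic --- is exactly where the real work lies: one must show the Abel map has full tangent rank $d_{Z,l'}$ at the non-reduced divisor $\sum_v m_v (p_v)$, i.e.\ $h^1(\calO_Z(\sum_v m_vp_v))=h^1(Z)-d_{Z,l'}$. This is what the paper's ``folklore lemma'' on symmetric products (the Wronskian-type independence of $\frac{\delta}{\delta z}f,\dots,(\frac{\delta}{\delta z})^df$ at a generic point) is for, and your proposal never invokes it in this role; without it neither the inequality in (1) nor the equality in (2) is established. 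A second, smaller error in (2): you treat $\overline{\im(c^{l'}(Z))}$ as an affine subspace with direction $W$ and conclude $B\subset\overline{\im(c^{l'}(Z))}$ from ``parallel directions sharing a point''; the image closure is not affine in general (only its high multiples are), and the correct mechanism, as in the paper, is the dimension identity $\dim(V_J(Z)\oplus\im(c^{l'}(Z)))=\dim(\im(c^{l'}(Z)))$ forcing $V_J(Z)\oplus\im(c^{l'}(Z))\subset\overline{\im(c^{l'}(Z))}$ by irreducibility.

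For part (3) you candidly flag that the key step is not rigorous, and indeed the ``isolate the $v_0$-component of $\tilde D$'' idea does not go through: a line bundle in $\im(c^{l'}(Z))$ admits many representing divisors and no canonical per-vertex decomposition, so nothing forces $\calO_Z(D_{v_0})$ itself to be realized by an effective divisor on $E_{v_0}$. The paper's route is quite different and is the bulk of the proof: it first reduces to the case where $Z$ is the cohomological cycle of $\calO_Z(D)$, proves an auxiliary lemma showing $h^0$ is unchanged when a rational divisor is spread into coefficient-$\le 1$ pieces at generic points, uses an extremal (minimal $\sum a_{v,i}^2$, $\sum b_{v,i}^2$) counterexample together with Serre duality to force all $a_{v,i},b_{v,i}\le 1$, and then derives a contradiction from the dimension count $\dim(M)=\dim(\im(c^{l'_1}(Z)))\ge \dim(\im(c^{-E_u^*}(Z))\oplus\im(c^{l'}(Z)))>\dim(\im(c^{l'}(Z)))$ at a bad vertex $u$, against $M\subset\overline{\im(c^{l'}(Z))}$. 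You would need to supply an argument of comparable substance to close (3).
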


\begin{remark}
Notice, that if the analytic type of the integer cycle $Z$ is generic, then the numbers $d_{Z, l'}$ can be computed combinatorially just from the resolution graph by \cite{NNAD}.
Similary the number $ \dim(V_v(Z) \oplus \im(c^{l'}(Z)))$ can be computed combinatorially, because it equals to $ \dim(\im(c^{l' - N \cdot E_v^*}(Z)))$, where $N$ is a very large integer number. 

This means, that the property $I = I'$ can be tested  and the number $ h^1(Z) - d_{Z, l'}$ can be computed combinatorially from the resolution graph, if the analytic type of the singularity $\tX$ or the cycle $Z$ is generic.
\end{remark}

\begin{proof}

For (1) notice first, that the type of divisors like $D$ can be specilased to divisors where $n_v = 0$, $r_v = 1$, $a_{v,1} = a_v > 0$, $a_v \in \bZ$, but $p_v = p_{v, 1}$ is still generic on $E_v$.
Since $h^1$ is semicontinous we are enough to prove the statement in this special case.

So we have $D = \sum_{v \in I} a_v p_v \in \eca^{l'}(Z)$ and we would like to argue, that $h^1(\calO_{Z}(D)) = h^1(Z) - d_{Z, l'}$.

From \cite{NNA1} we know, that $h^1(\calO_Z) - h^1(\calO_{Z}(D)) =  \dim(\im(T_D(c^{l'}(Z)))) $ and if $D' \in \eca^{l'}(Z)$ is a generic divisor, then $h^1(\calO_Z) - h^1(\calO_{Z}(D')) =  \dim(\im(T_{D'}(c^{l'}(Z))))$.

 It means, that we have to prove, that $\dim(\im(T_D(c^{l'}(Z)))) = d_{Z, l'} $, which is equal to $\dim(\im(T_{D'}(c^{l'}(Z))))$, where $D' \in \eca^{l'}(Z)$ is a generic divisor.

Let's denote the vertices in $I$ by $v_1, \cdots v_{|I|}$, we prove the following statement by induction on the parameter $0 \leq i \leq |I|$:

Let $p_1 \in E_{v_1}, \cdots , p_i \in E_{v_i}$ be generic points, and $D'_j \in \eca^{-a_{v_j} E_{v_j}^*}(Z)$ generic divisors for $i+1 \leq j \leq |I|$, then $h^1(Z, \calO_Z(\sum_{1 \leq j \leq i}a_{v_j} p_j +  \sum_{i+1 \leq j \leq |I|}D'_j )) =  h^1(Z) - d_{Z, l'}$.

Now for $i= 0$ the statement is trivial, and for $i = |I|$ it yields our statement, so we have to do just the induction step.

So assume, that $i > 0$ and let's have generic points $p_1 \in E_{v_1}, \cdots , p_i \in E_{v_i}$ and $D'_j \in \eca^{-a_{v_j} E_{v_j}^*}(Z)$ generic divisors for $i+1 \leq j \leq |I|$, and let's denote $D_{I \setminus v_i} = \sum_{1 \leq j \leq i-1}a_{v_j} p_j + \sum_{i+1 \leq j \leq |I|}D'_j $.

We know by induction, that $\dim(\im (T_{D_{I \setminus v_i} + D'_i}(c^{l'}(Z)))) = d_{Z, l'}$ for a generic divisor $D'_i \in \eca^{-a_{v_i} E_{v_i}^*}(Z)$, and we want to prove that 
 $\dim(\im (T_{D_{I \setminus v_i} + a_{v_i} p_i}(c^{l'}(Z)) )) = d_{Z, l'}$ for a generic point $p_i \in E_{v_i}$.

Let's denote the linearization of $\im \left(T_{D_{I \setminus v_i}}(c^{l' +a_{v_i}E_{v_i}^*}(Z)) \right) $ by $V$ which is a subspace of $\pic^{l'}(Z) \cong H^1(\calO_Z)$, and let's denote the quotient map $H^1(\calO_Z) \to H^1(\calO_Z) / V$ by $\pi$.

Let's consider the map $ f: S^{a_{v_i}}(E_{v_i, reg}) \to \pic^{l'}(Z) / V$, where $E_{v_i, reg}$ is the smooth part of the exceptional divisor $E_{v_i}$ and $f(x)$ is the coset of the line bundle $c^{l'}(Z)( x + D_{I \setminus v_i})$.

We have $\im (T_{D_{I \setminus v_i} + D'_i}(c^{l'}(Z))) = \pi^{-1}(\im(T_{D'_i}f))$ and 
similarly $\im(T_{D_{I \setminus v_i} + a_{v_i} p_i}(c^{l'}(Z)))= \pi^{-1}(\im(T_{ a_{v_i} p_i}f))$.

However the preceeding lemma shows, that they have the same dimension, which proves part 1) of our lemma.

For part 2), assume first, that $I = I'$ and let's denote by $I'' \subset I$ the subset of vertices such that $ v \in I''$, if and only if  $n_v = 0$, and $a_{v, i} \in \bZ$ for all $1 \leq i \leq r_v$ and let's denote $I \setminus I''$ by $J$.

We know, that $\calO_{Z}(D) = \calO_{Z}( \sum_{v \in I'', 1 \leq i \leq r_v} a_{v, i} p_{v, i} + \sum_{v \in J, 1 \leq i \leq r_v} a_{v, i} p_{v, i} -   \sum_{v \in J, 1 \leq i \leq n_v} b_{v, i} q_{v, i})$.

For each vertex $v \in J$ let's choose $ (l', E_v)$ generic points $s_{v, 1}, \cdots, s_{v, (l', E_v)}$ and notice, that $ \calO_{Z}( \sum_{v \in I'', 1 \leq i \leq r_v} a_{v, i} p_{v, i} +  \sum_{v \in J, 1 \leq i \leq (l', E_v)}  s_{v, i}) \in  \im(c^{l'}(Z))$, let's denote this line bundle by $\calL$.

Notice that $\calO_{Z}(D) = \calL \otimes \calO_{Z}(  \sum_{v \in J, 1 \leq i \leq r_v} a_{v, i} p_{v, i} -   \sum_{v \in J, 1 \leq i \leq n_v} b_{v, i} q_{v, i} -   \sum_{v \in J, 1 \leq i \leq (l', E_v)}  s_{v, i}) \in \calL \oplus V_J(Z) \subset  \im(c^{l'}(Z)) \oplus V_J(Z)$.

We know, that $\dim(V_J(Z) \oplus \im(c^{l'}(Z))) = \dim(\im(c^{l'}(Z))$.

Indeed we know, that for each vertex $v \in J$ one has $V_v(Z) \oplus \im(c^{l'}(Z)) \subset \overline{\im(c^{l'}(Z))}$, since we have $V_J(Z) = \oplus_{v \in J} V_v(Z)$.

So it follows, that $V_J(Z) \oplus \im(c^{l'}(Z)) \subset \overline{\im(c^{l'}(Z))}$, so indeed we have $\dim(V_J(Z) \oplus \im(c^{l'}(Z))) = \dim(\im(c^{l'}(Z))$.

It means, that we have $V_J(Z) \oplus \im(c^{l'}(Z)) \subset \overline{ \im(c^{l'}(Z))}$, which yields, that $\calO_{Z}(D) \in \overline{ \im(c^{l'}(Z))}$.

Notice that by semicontinuity we get immediately, that $h^1(\calO_{Z}(D)) \geq h^1(Z) - d_{Z, l'}$, however by part 1) we know, that $h^1(\calO_{Z}(D)) \leq h^1(Z) - d_{Z, l'}$, which means that $h^1(\calO_{Z}(D)) = h^1(Z) - d_{Z, l'}$ and this proves part 2) completely.

For statement 3) we first prove a lemma, which will be cruical in the proof of our main statement:

\begin{lemma}
Let $\mathcal{T}$ be a resolution graph, and $\tX$ an arbitrary resolution of a normal surface singularity with resolution graph $\mathcal{T}$, $I \subset \calv$ an arbitrary subset and $Z \geq E$ an effective integer cycle, such that $Z_v = 1$ for all $v \in I$.

For all vertices $v \in I$ let's have integers $r_v > 0$, and rational numbers $a_{v,1}, \cdots , a_{v, r_v} > 0$, $a_v =  \sum_{1 \leq i \leq r_v} a_{v, i}$.

Let's have a rational line bundle $\calL$ on $\tX$ with Chern class $l'' \in L' \otimes \bQ$, such that $c_1(\calL) + \sum_{v \in \calv} a_v E_v^* \in L'$, let's denote this Chern class by $l' \in L'$.

Let's have generic ponts on $E_v$, $p_{v, 1}, \cdots p_{v, r_v}$ for all $v \in I$, and let's denote the rational divisor $D =  \sum_{v \in I, 1 \leq i \leq r_v} a_{v, i} p_{v, i}$ and assume that $H^0(Z, \calL \otimes \calO_Z(-D) )_{reg} \neq 0$.

For a vertex $v \in I$ and $1 \leq i \leq r_v$ let's denote $D(v, i) = 0$ if $a_{v, i} \in \bZ$ and $D(v, i) = 1$ otherwise.

For $v \in I$ and $1 \leq i \leq r_v$  let's have $[a_{v, i}] + D(v, i)$ generic points on the exceptional divisor $E_v$ and let's denote them by $s_{v, i, j}$, where $1 \leq j \leq [a_{v, i}] + D(v, i)$.

Furthermore let's denote $c_{v, i, j} = 1$ if $1 \leq j \leq [a_{v, i}]$ and if $D(v, i) = 1$, then $c_{v, i, j} = a_{v, i} - [a_{v, i}]$ if $j = [a_{v, i}] + 1$.

Let's have $D' = \sum_ {v \in I, 1 \leq i \leq r_v, 1 \leq j \leq [a_{v, i}] + D(v, i)} c_{v, i, j} \cdot s_{v, i, j}$, then we have $h^0(Z, \calL \otimes \calO_Z( - D)) = h^0(Z, \calL \otimes \calO_Z( - D'))$.
\end{lemma}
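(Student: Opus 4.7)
The plan is to establish both inequalities $h^0(Z, \calL \otimes \calO_Z(-D)) \geq h^0(Z, \calL \otimes \calO_Z(-D'))$ and its reverse, using upper semi-continuity of $h^0$ on the affine Picard space $\pic^{l'}(Z) \cong H^1(\calO_Z)$ together with the preceding lemma on ranks of symmetric-product maps $f^d$. Note that since $l' = c_1(\calL) + \sum_v a_v E_v^* \in L'$ by hypothesis and $\pic^0(Z)$ is torsion-free, the rational line bundles $\calL \otimes \calO_Z(-D)$ and $\calL \otimes \calO_Z(-D')$ both correspond to honest line bundles in $\pic^{l'}(Z)$.

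For the direction $\geq$, I would construct a continuous one-parameter family of configurations in which, for each $(v,i)$, the points $s_{v,i,j}$ flow along $E_v$ to converge to $p_{v,i}$. This produces a flat family of line bundles in $\pic^{l'}(Z)$ degenerating $\calL \otimes \calO_Z(-D')$ (generic parameter) to $\calL \otimes \calO_Z(-D)$ (central fiber). Upper semi-continuity of $h^0$ in this family yields the inequality immediately.

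For the reverse inequality, I would reduce inductively (changing one $(v,i)$ at a time) to the case of modifying a single pair $(v,i)$: fixing the other contributions and denoting the partial twist by $\calM$, the task becomes to compare $h^0(Z, \calM \otimes \calO_Z(-a_{v,i} p_{v,i}))$ and $h^0(Z, \calM \otimes \calO_Z(-D''_{v,i}))$, where $D''_{v,i}$ is the local split. Consider the configuration-to-line-bundle map $\Phi$ sending the local split points to $\calM \otimes \calO_Z(-\sum_j c_{v,i,j} s_{v,i,j})$, a Minkowski sum of Abel-type maps from $E_v$, and apply the preceding lemma to the underlying map $f_v : E_v \to H^1(\calO_Z)$ induced by $x \mapsto \calO_Z(-x)$ (restricted to the affine hull of its image). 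The preceding lemma yields that the tangent-map rank of $\Phi$ at the diagonal configuration $(p_{v,i},\ldots,p_{v,i})$ equals its rank at a generic configuration. Combined with the formula from \cite{NNA1} expressing $h^1$ of a line bundle in the image of an Abel map as $h^1(\calO_Z)$ minus the tangent-map rank at a preimage, this gives $h^1(\calM \otimes \calO_Z(-a_{v,i} p_{v,i})) = h^1(\calM \otimes \calO_Z(-D''_{v,i}))$, and hence the desired $h^0$ equality via $\chi$.

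The main obstacle is the rational-coefficient aspect: the preceding lemma on ranks of $f^d$ and the \cite{NNA1} rank-$h^1$ formula are formulated for integer Cartier divisors and honest line bundles, whereas we must handle rational $a_{v,i}, c_{v,i,j}$. This can be resolved by passing to the $N$-th tensor power for $N$ such that $Na_{v,i}, Nc_{v,i,j} \in \bZ$, applying the argument at the integer level (where all divisors become honest Cartier divisors on $Z$ with Chern classes in $-\calS'$), and descending back through the bijection between integer-Chern-class rational line bundles and honest line bundles. Throughout, the hypothesis $H^0(Z, \calL \otimes \calO_Z(-D))_{reg} \neq \emptyset$ ensures the relevant line bundles lie in the images of the corresponding Abel maps so that the $h^1$ formula applies, and is preserved by the degenerations involved.
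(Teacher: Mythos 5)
Your first half (the inequality $h^0(Z,\calL\otimes\calO_Z(-D))\geq h^0(Z,\calL\otimes\calO_Z(-D'))$ by degenerating the split points onto the $p_{v,i}$ and invoking semicontinuity) is exactly what the paper does, and is fine. The reverse inequality, which is the entire content of the lemma, has a genuine gap in your proposal. Your plan is to compare the ``diagonal'' configuration $a_{v,i}\,p_{v,i}$ with the split configuration via the rank of the tangent map of a configuration-to-Picard map $\Phi$, using the preceding lemma to equate the rank at the diagonal with the generic rank, and then the identity $h^1(\calO_Z(D))=h^1(\calO_Z)-\operatorname{rk}T_Dc^{l'}(Z)$ from [NNI]. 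This works when $a_{v,i}\in\bZ$ (it is literally the proof of part 1 of the enclosing lemma), but the inner lemma is needed precisely for fractional $a_{v,i}$, and there both links of your chain break. First, the preceding lemma is a statement about the \emph{symmetric} map $f^d:S^d(C)\to\bC^N$ with all coefficients equal to $1$: the full rank at $d\cdot(p)$ is recovered only in the elementary-symmetric coordinates, where the higher derivatives $f'',f''',\dots$ appear. For a configuration with coefficients $(1,\dots,1,\{a\})$ the natural domain is $S^{[a]}(C)\times C$, and at the fully coincident configuration the extra $C$-factor contributes only $\{a\}f'(p)$, which already lies in the span coming from the symmetric factor; so the rank at the diagonal is in general strictly smaller than the generic rank, and no analogue of the preceding lemma holds. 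Second, for $a\notin\bZ$ the configuration $a\cdot p$ is not a point of $\eca^{l'}(Z)$ at all, so the tangent-rank formula for $h^1$ has no meaning at it; and your proposed fix of passing to the $N$-th tensor power does not transfer the conclusion back, since $h^0(Z,\calL^{\otimes N})$ carries no information about $h^0(Z,\calL)$.

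The paper's actual argument is of a completely different nature and avoids tangent computations at rational configurations. It assumes a counterexample exists, restricts to counterexamples whose coefficients have fractional parts among those of the $a_{v,i}$, and takes one with $\sum a_{v,i}$ maximal and, among those, $\sum a_{v,i}^2$ minimal. Base points of $H^0(Z,\calL\otimes\calO_Z(-D))$ at the $p_{v,i}$ are excluded using maximality of $\sum a_{v,i}$; then, for some $a_{u,j}>1$, one point of integral weight is split off: taking a section $s$ with $D''=|s|$ disjoint from $|D|$, a tangent-map argument applied to the honest Cartier divisor $D''$ and the single moving point $p_{u,j}$ shows $\calO_Z(D''+p_{u,j}-q)\in\im(c^{l'}(Z))$ for generic $q\in E_u$, so $h^0$ does not drop, while $\sum a_{v,i}^2$ strictly decreases (as $(a_{u,j}-1)^2+1<a_{u,j}^2$), contradicting minimality. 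If you want to repair your write-up you should replace the diagonal-rank step by an argument of this kind; as it stands the central step of your proof does not go through.
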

\begin{proof}

We can get the divisor $D$ by degeneration of $D'$, so by semicontinuity we immediately get, that $h^0(Z, \calL \otimes \calO_Z( - D)) \geq h^0(Z, \calL \otimes \calO_Z( - D'))$, so it is enough to prove in the following, that $h^0(Z, \calL \otimes \calO_Z( - D)) \leq h^0(Z, \calL \otimes \calO_Z( - D'))$.

So let's fix the rational line bundle $\calL$ and assume to the contrary that $h^0(Z, \calL \otimes \calO_Z( - D)) > h^0(Z, \calL \otimes \calO_Z( - D'))$.

Let's look at the counterexmaples, where the fractional parts of coeficcients of the points in the divisor are a subset of the fractional parts of the numbers $a_{v, i}$ .

We can assume, that $\sum_{v \in I, 1 \leq i \leq r_v} a_{v, i}$ is maximal, and we assume furthermore, that among these maximal cases $\sum_{v \in I, 1 \leq i \leq r_v} a_{v, i}^2$ is minimal among these counterexamples.

We know, that the values of  $\sum_{v \in I, 1 \leq i \leq r_v} a_{v, i}$ and  $\sum_{v \in I, 1 \leq i \leq r_v} a_{v, i}^2$ are bounded and can take only finitely many values because of
the condition on the fractional parts, so we can indeed make these assumptions.

Indeed we have $h^0(Z, \calL - D)_{reg} \neq 0$ which means, that $0 \leq \sum_{v \in I, 1 \leq i \leq r_v} a_{v, i} \leq (l'', E)$, from this the boundedness of $\sum_{v \in I, 1 \leq i \leq r_v} a_{v, i}^2$ also follows.

Now assume, that the sections in $h^0(Z, \calL \otimes \calO_Z( - D))$ have got a base point at some of the points $p_{v, i}$, where $v \in I$ and $1 \leq i \leq r_v$, then we have $h^0(Z, \calL \otimes \calO_Z( - D)) = h^0(Z, \calL \otimes \calO_Z( - D- p_{v, i}))$ and $H^0(Z, \calL \otimes \calO_Z( - D- p_{v, i}))_{reg} \neq 0$.

We know, that $ D + p_{v, i}$ cannot be a counterexample for the lemma, because $\sum_{v \in I, 1 \leq i \leq r_v} a_{v, i}$ was maximal among the counterexamples.

It means, that if we have a generic point $s' \in E_v$, then $h^0(Z, \calL \otimes \calO_Z( - D- p_{v, i})) = h^0(Z, \calL \otimes \calO_Z( - D'- s'))$, which means, that $ h^0(Z, \calL \otimes \calO_Z( - D')) \geq  h^0(Z, \calL \otimes \calO_Z( - D'- s')) =  h^0(Z, \calL \otimes \calO_Z( - D)) $ and we are done.

So we can assume in the following, that the sections in $H^0(Z, \calL \otimes \calO_Z( - D)) $ hasn't got a base point at any of the points $p_{v, i}$.

We know, that there exists a vertex $v \in I$ and $1 \leq i \leq r_v$, such that $a_{v, i} > 1$ otherwise we could take $D' = D$ and then trivially we get $ h^0(Z, \calL \otimes \calO_Z( - D)) =  h^0(Z, \calL \otimes \calO_Z( - D')) $.

So suppose that $u \in I$ and $1 \leq j \leq r_v$, such that $a_{u, j} > 1$ and let's have a section $s \in  h^0(Z, \calL \otimes \calO_Z( - D)) $ such that $|s| \cap |D| = \emptyset$
and let's denote $D'' = |s|$.

Let's look at the rational line bundle $\calL' = \calL \otimes \calO_Z(- D + a_{u, j} p_{u, j})$ and notice that $\calO_Z( a_{u, j} p_{u, j} + D'') = \calL' $.

We can fix the rational line bundle $\calL'$ and move $p_{u, j}$ as a generic point in $E_u$, while we have always an apporpriate section $s \in H^0(Z, \calL \otimes \calO_Z( - D)) $ corresponding to it with $D'' = |s|$.

Now, let's have the point $(D'', p_{u, j}) \in \eca^{l'}(Z) \otimes E_{u, reg}$, and let $T$ be the subspace of the tangent space of $ \eca^{l'}(Z) \otimes E_{u, reg}$ in the point $(D'', p_{u, j})$, which is the pullback of the tangent space of $ \eca^{l'}(Z)$ in $D''$.

Let's have the map $g : \eca^{l'}(Z) \otimes E_{u, reg} \to \pic^{c_1(\calL')}(Z)$ given by $ g(D^*, q) = \calO_{Z}(D^* + a_{u, j} q)$.

We know that in $g^{-1}( \calO_Z(D'' +   a_{u, j} p_{u, j}))$ the second coordinate is not constant, which yields that (for generic choice of the point $ p_{u, j}$) the kernel of the tangent map $T_{(D'', p_{u, j})}g : T_{(D'', p_{u, j})} \left(\eca^{l'}(Z) \otimes E_{u, reg} \right) \to T_{\calO_Z(D'' +   a_{u, j} p_{u, j} )} \pic^{c^1(\calL')}(Z)$ isn't contained in the subspace $T$.

This also means, that if we look at the map $g' : \eca^{l'}(Z) \otimes E_{u, reg} \to \pic^{l' - E_u^*}(Z)$ given by $ g'(D^* , q) = \calO_{Z}(D^* + q)$, then the kernel of the tangent map $T_{(D'', p_{u, j})}g' : T_{(D'', p_{u, j})} \left(\eca^{l'}(Z) \otimes E_{u, reg} \right) \to  T_{\calO_Z(D'' +  p_{u, j} )} \pic^{l' - E_u^*}(Z)$ isn't contained in the subspace $T$.

This means however by \cite{NNA1}, that $g'^{-1}(\calO_Z(D'' + p_{u, j}))$ has got nonconstant second coordinate, so if $q$ is a generic point on $E_u$, then $ \calO_Z(D'' + p_{u, j} - q) \in \im(c^{l'}(Z))$.

This means, that $h^0( \calO_Z(D'' + p_{u, j})) > h^0(\calO_Z( D''))$ and if $q$ is a generic point of $E_u$, then $h^0(\calO_Z( D'' + p_{u, j}-q)) = h^0(\calO_Z( D'' + p_{u, j})) - 1$, which means, that $h^0(\calO_Z(D'' + p_{u, j}-q)) \geq h^0(\calO_Z( D'')) = h^0(Z, \calL \otimes \calO_Z( - D))$.

Notice that $\calO_Z(D'' + p_{u, j}-q ) = \calL \otimes \calO_Z( - \sum_{v \in I, 1 \leq i \leq r_v, (v, i) \neq (u, j)} a_{v, i} p_{v, i} - (a_{u, j}-1) p_{u, j} - q) $ and notice, that  $\sum_{v \in I, 1 \leq i \leq r_v} a_{v, i}^2 > \sum_{v \in I, 1 \leq i \leq r_v, (v, i) \neq (u, j)} a_{v, i}^2 +  (a_{u, j}-1)^2 + 1$,
which means by the minimality of our counterexample, that $h^0(\calO_Z(D'' + p_{u, j}-q) ) \leq  h^0(Z, \calL \otimes \calO_Z( - D'))$, which means, that $h^0(Z, \calL \otimes \calO_Z( - D)) \leq h^0(Z, \calL \otimes \calO_Z( - D'))$, however this is a contradiction which proves our lemma completely.

\end{proof}

Now for statement 3) assume to the contrary, that $ \calO_Z(D) \in \im(c^{l'}(Z))$ and $I \neq I'$, which means, that there is a vertex $v \in I$, such that $\dim(V_v(Z) \oplus \im(c^{l'}(Z))) > \dim(\im(c^{l'}(Z))$ and we have $n_v > 0$ or there is an index $1 \leq i \leq r_v$ such that $a_{v, i} \notin \bZ$.

First we want to argue, that we can assume, that $Z$ is the cohomological cycle of a generic line bundle in $ \im(c^{l'}(Z))$.

Indeed assume, that we know the statement in this case and assume now, that the cohomological cycle of a generic line bundle in $ \im(c^{l'}(Z))$ is some cycle $0 \leq Z' < Z$.

By \cite{NNAD} this means, that $ \im(c^{l'}(Z))$ is birational to a fibration over $ \im(c^{l'}(Z'))$ where the fibres are complex vector spaces of dimension $h^1(\calO_Z) - h^1(\calO_Z')$.

We know, that there is a vertex $v \in I$ such that $\dim(V_v(Z) \oplus \im(c^{l'}(Z))) > \dim(\im(c^{l'}(Z)))$ and we have $n_v > 0$ or there is an index $1 \leq i \leq r_v$ such
that $a_{v, i} \notin \bZ$.

Now we get that $\dim(V_v(Z') \oplus \im(c^{l'}(Z'))) > \dim(\im(c^{l'}(Z'))$:

Indeed, let's denote the kernel of the linear surjection $\pi : H^1(\calO_Z) \to H^1(\calO_Z)$ by $K$.

We know, that $ \overline{K \oplus \im(c^{l'}(Z))} = \overline{\im(c^{l'}(Z)}$, $\pi(\im(c^{l'}(Z)) = \im(c^{l'}(Z'))$ and $\pi(V_v(Z)) = V_v(Z')$.

We get from these facts, that $\dim(V_v(Z') \oplus \im(c^{l'}(Z'))) +  h^1(\calO_Z) - h^1(\calO_Z')= \dim(V_v(Z) \oplus \im(c^{l'}(Z)))$ and
$\dim(\im(c^{l'}(Z'))) + h^1(\calO_Z) - h^1(\calO_Z') = \dim(\im(c^{l'}(Z)))$, and we indeed get, that  $\dim(V_v(Z') \oplus \im(c^{l'}(Z'))) > \dim(\im(c^{l'}(Z'))$.

In particular we get, that $E_v \leq Z'$.

We also know, that $n_v > 0$ or there is an index $1 \leq i \leq r_v$, such
that $a_{v, i} \notin \bZ$, so it means by the special case of the statement that $ \calO_{Z'}(D) \notin \im(c^{l'}(Z'))$, which indeed yields $ \calO_{Z}(D) \notin \im(c^{l'}(Z))$, since
$\calO_{Z'}(D) = \calO_{Z}(D) | Z'$.

It means, that in the following we can assume, that $Z$ is the cohomological cycle of a generic line bundle in $ \im(c^{l'}(Z))$.

We want to conlude from this, that $Z$ is the cohomological of the line bundle $\calO_Z(D)$, so that $h^1(Z', \calO_{Z'}(D)) < h^1(Z, \calO_{Z}(D))$ for every cycle $0 \leq Z' < Z$.

Indeed let's have a generic line bundle $\calL_{gen} \in \im(c^{l'}(Z))$, then we have $h^1(Z', \calL_{gen} | Z') < h^1(Z,  \calL_{gen})$ for every cycle $0 \leq Z' < Z$.

Let's choose generic points $s_v \in E_v$ for each vertex $v \in I$ and let's have the divisor $D^* = \sum_{v \in I} (a_v - b_v) s_v$, now by part 1) we know, that for every cycle $0 \leq Z' < Z$ we have $h^1(Z', \calO_{Z'}(D^*)) = h^1(Z',  \calL_{gen} | Z')$, which means, that $h^1(Z', \calO_{Z'}(D^*)) < h^1(Z, \calO_{Z}(D))$ for every cycle $0 \leq Z' < Z$, because we get $ h^1(Z, \calO_{Z}(D)) =  h^1(Z, \calO_{Z}(D^*))$ from the assumption $ \calO_Z(D) \in \im(c^{l'}(Z))$.

On the other hand by semicontinuity we have $h^1(Z', \calO_{Z'}(D)) \leq h^1(Z', \calO_{Z'}(D^*)) $, so indeed we get $h^1(Z', \calO_{Z'}(D)) < h^1(Z, \calO_{Z}(D))$ for every cycle $0 \leq Z' < Z$, which means, that  $Z$ is the cohomological cycle of the line bundle $\calO_Z(D)$.

Assume furthermore, that while the numbers $r_v$,  $a_{v,1}, \cdots , a_{v, r_v} > 0$ are fixed, this counterexample is extremal in the sense, that $\sum_{v \in I, 1 \leq i \leq n_v} b_{v, i}^2$ is minimal among the counterexamples $D' =  \sum_{v \in I, 1 \leq i \leq r_v} a'_{v, i} p_{v, i} -   \sum_{v \in I, 1 \leq i \leq n_v} b'_{v, i} q_{v, i}$, where the fractional parts of the numbers $b'_{v, i}$ are a subset of the fractional parts of the numbers $b_{v, i}$.

First we want to argue, that  $ b_{v, i} \leq 1$ for all $ v \in I$ and $1 \leq i \leq n_v$, so assume to the contrary and by symmetry, that $b_{w, k} > 1$ for some $w \in I, 1 \leq k \leq n_w$.

Let's denote the rational line bundle $\calL = \calO_Z(\sum_{v \in I, 1 \leq i \leq r_v} a_{v, i} p_{v, i})$,  and notice that by the assumption if $q_{v, 1}, \cdots q_{v, n_v}$ are generic points on  $E_v$ for all  $v \in I$, then $H^0(Z, \calL \otimes \calO_Z(-  \sum_{v \in I, 1 \leq i \leq n_v} b_{v, i} q_{v, i}))_{reg} \neq \emptyset$.

For some vertex $v \in I$ and integer $1 \leq i \leq n_v$ let's denote $D(v, i) = 0$ if $b_{v, i} \in \bZ$ and $D(v, i) = 1$ otherwise.

For $v \in I$ and $1 \leq i \leq n_v$  let's have $[b_{v, i}] + D(v, i)$ general points on $E_v$ and let's denote them by $s_{v, i, j}$, where $1 \leq j \leq [b_{v, i}] + D(v, i)$.
Furthermore let's denote $c_{v, i, j} = 1$ if $1 \leq j \leq [b_{v, i}] $ and if $D(v, i) = 1$, then $c_{v, i, j} = b_{v, i} - [b_{v, i}]$ if $j = [b_{v, i}] + 1$.

Let's have $D' = \sum_ {v \in I, 1 \leq i \leq n_v, 1 \leq j \leq [b_{v, i}] + D(v, i)} c_{v, i, j} \cdot s_{v, i, j}$, then by the previous lemma we have $h^0(Z, \calL\otimes \calO_Z( -  \sum_{v \in I, 1 \leq i \leq n_v} b_{v, i} q_{v, i})) = h^0(Z, \calL \otimes \calO_Z( - D'))$.

Notice, that $H^0(Z, \calL \otimes \calO_Z(-  \sum_{v \in I, 1 \leq i \leq n_v} b_{v, i} q_{v, i}))_{reg} \neq \emptyset$ which means, that:

\begin{equation*}
h^0(Z, \calL \otimes \calO_Z(-  \sum_{v \in I, 1 \leq i \leq n_v} b_{v, i} q_{v, i})) > h^0(Z-A, \calL| (Z-A) \otimes \calO_{Z-A}(-  \sum_{v \in I, 1 \leq i \leq n_v} b_{v, i} q_{v, i} - A)),
\end{equation*}
 for every cycle $0 < A \leq Z$.

On the other hand by semicontinuity we have:

\begin{equation*}
 h^0 ( Z- A, \calL| (Z-A) \otimes  \calO_{Z-A}(-  \sum_{v \in I, 1 \leq i \leq n_v} b_{v, i} q_{v, i} - A) ) \geq h^0 ( Z - A, \calL|(Z-A) \otimes \calO_{Z-A}( - D'-A)),
\end{equation*}
 for every cycle $0 \leq A \leq Z$.

This means, that $h^0(Z , \calL \otimes \calO_Z(- D')) > h^0(Z - A, \calL|(Z-A) \otimes \calO_{Z-A}( - D'-A))$ for every cycle $0 < A \leq Z$, which means, that $H^0(Z , \calL \otimes \calO_Z(- D'))_{reg} \neq \emptyset$.

It means, that $\calL \otimes \calO_Z(- D') \in \im(c^{l'}(Z))$, however by the minimality of $\sum_{v \in I, 1 \leq i \leq n_v} b_{v, i}^2$
among the counterexamples $ \sum_{v \in I, 1 \leq i \leq r_v} a'_{v, i} p_{v, i} -   \sum_{v \in I, 1 \leq i \leq n_v} b'_{v, i} q_{v, i}$, where the fractional parts of the numbers $b'_{v, i}$ are a subset of the fractional parts of the numbers $b_{v, i}$ we have a contradiction.

This means, that in the following we can assume, that  $ b_{v, i} \leq 1$ for all $ v \in I$ and $1 \leq i \leq n_v$.

In the following fix the numbers $n_v$,  $b_{v,1}, \cdots , b_{v, n_v} > 0$ and assume that this counterexample is extremal in the sense, that $\sum_{v \in I, 1 \leq i \leq r_v} a_{v, i}^2$ is minimal among the counterexamples, where the fractional parts of the numbers $a'_{v, i}$ are a subset of the fractional parts of the numbers $a_{v, i}$.

We want to argue in the following, that $a_{v, i} \leq 1$ for every $v \in I$, $1 \leq i \leq r_v$, indeed assume to the contrary, that $a_{w, k} > 1$ for some $w \in I, 1 \leq k \leq r_w$.

By Seere duality, we have $h^1(Z, \calO_Z(D)) = h^0(Z, \calO_Z(K + Z - D))$, and we know, that $Z$ is the cohomological cycle of the line bundle $\calO_Z(D)$, so we get, that $ H^0(Z, \calO_Z(K + Z - D))_{reg} \neq \emptyset$.

Let's denote the rational line bundle $\calO_Z(K + Z + \sum_{v \in I, 1 \leq i \leq n_v} b_{v, i} q_{v, i})$ by $\calL$, notice that by the assumption if $p_{v, 1}, \cdots p_{v, r_v}$ are generic points on  $E_v$ for all  $v \in I$, then $H^0(Z, \calL  \otimes \calO_Z(-  \sum_{v \in I, 1 \leq i \leq r_v} a_{v, i} p_{v, i}))_{reg} \neq \emptyset$.

Now for some vertex $v \in I$ and $1 \leq i \leq r_v$ let's denote $G(v, i) = 0$ if $a_{v, i} \in \bZ$ and $G(v, i) = 1$ otherwise.

For $v \in I$ and $1 \leq i \leq r_v$  let's have $[a_{v, i}] + G(v, i)$ general points on $E_v$ and let's denote them by $s_{v, i, j}$, where $1 \leq j \leq [a_{v, i}] + G(v, i)$.
Furthermore let's denote $c_{v, i, j} = 1$ if $1 \leq j \leq [a_{v, i}] $ and if $G(v, i) = 1$, then $c_{v, i, j} = a_{v, i} - [a_{v, i}]$ if $j = [a_{v, i}] + 1$.

Let's have $D' = \sum_ {v \in I, 1 \leq i \leq r_v, 1 \leq j \leq [a_{v, i}] + G(v, i)} c_{v, i, j} \cdot s_{v, i, j}$, then by the previous lemma we have $h^0(Z, \calL \otimes \calO_Z( -  \sum_{v \in I, 1 \leq i \leq r_v} a_{v, i} p_{v, i})) = h^0(Z, \calL  \otimes \calO_Z(- D'))$.

This means by Seere duality, that $h^0(Z, \calO_Z(D)) = h^0(Z, \calO_Z(D' -  \sum_{v \in I, 1 \leq i \leq n_v} b_{v, i} q_{v, i}))$.

Notice, that by semicontinuity for every cycle $0 < A \leq Z$ we have $h^0(Z- A, \calO_{Z-A}(D-A)) \geq h^0(Z-A, \calO_{Z-A}( D' -  \sum_{v \in I, 1 \leq i \leq n_v} b_{v, i} q_{v, i} - A))$ and we have $h^0(Z, \calO_Z(D)) > h^0(Z- A, \calO_{Z-A}(D-A))$, which means, that $ H^0(Z, \calO_Z(D' -  \sum_{v \in I, 1 \leq i \leq n_v} b_{v, i} q_{v, i}))_{reg} \neq \emptyset$.

This contradicts the minimality of the value $\sum_{v \in I, 1 \leq i \leq r_v} a_{v, i}^2$, which means, that in the following we can assume, that $a_{v, i} \leq 1$ for all $v \in I, 1 \leq i \leq r_v$ and $b_{v, i} \leq 1$ for all $v \in I, 1 \leq i \leq n_v$.

Notice, that since $H^0(Z, \calO_Z(D))_{reg} \neq \emptyset $ for general points $p_{v, i}, q_{v, i}$ we have $\oplus_{v \in I, 1 \leq i \leq r_v}  a_{v, i} \cdot \im(c^{-E_v^*}(Z)) \oplus \oplus_{v \in I, 1 \leq i \leq n_v}  -b_{v, i} \cdot \im(c^{-E_v^*}(Z)) \subset \overline{\im(c^{l'}(Z))}$,
let's denote $M = \oplus_{v \in I, 1 \leq i \leq r_v}  a_{v, i} \cdot \im(c^{-E_v^*}(Z)) \oplus \oplus_{v \in I, 1 \leq i \leq n_v}  -b_{v, i} \cdot \im(c^{-E_v^*}(Z))$.

Let's denote in the following $l'_1 = \sum_{v \in I}- (r_v + n_v) E_v^*$, we claim, that $\dim(M) = \dim(\im(c^{l'_1}(Z)))$.

Indeed, notice that $\im(c^{l'_1}(Z)) = \oplus_{v \in I, 1 \leq i \leq r_v}  \im(c^{-E_v^*}(Z)) \oplus \oplus_{v \in I, 1 \leq i \leq n_v} \im(c^{-E_v^*}(Z))$.

Since the Minkowski map $f:  \otimes_{v \in I, 1 \leq i \leq r_v}  \im(c^{-E_v^*}(Z)) \otimes \otimes_{v \in I, 1 \leq i \leq n_v} \im(c^{-E_v^*}(Z)) \to \im(c^{l'_1}(Z))$ is dominant, we know, that if $z_{v, i}$ are generic points in $\im(c^{-E_v^*}(Z))$ for $v \in I, 1 \leq i \leq r_v$
and $t_{v, i}$ are generic points in $\im(c^{-E_v^*}(Z))$ for $v \in I, 1 \leq i \leq n_v$, then:

\begin{equation*}
\dim(\im(c^{l'_1}(Z))) = \dim \left( \oplus_{v \in I, 1 \leq i \leq r_v} T_{z_{v, i}}\left( \im(c^{-E_v^*}(Z))\right) \oplus \oplus_{v \in I, 1 \leq i \leq n_v} T_{t_{v, i}}\left( \im(c^{-E_v^*}(Z) ) \right) \right).
\end{equation*}

Similarly the Minkowski map $g: \otimes_{v \in I, 1 \leq i \leq r_v}  a_{v, i} \cdot \im(c^{-E_v^*}(Z)) \otimes \otimes_{v \in I, 1 \leq i \leq n_v} -b_{v, i} \cdot \im(c^{-E_v^*}(Z)) \to M$ is dominant, so we know, that:

\begin{equation*}
\dim(M) = \dim \left( \oplus_{v \in I, 1 \leq i \leq r_v} T_{a_{v, i} \cdot z_{v, i}}\left(  a_{v, i} \cdot \im(c^{-E_v^*}(Z))\right) \oplus \oplus_{v \in I, 1 \leq i \leq n_v} T_{- b_{v, i} \cdot t_{v, i}}\left(  -b_{v, i} \cdot  \im(c^{-E_v^*}(Z)) \right) \right).
\end{equation*}

Since $T_{z_{v, i}}\left( \im(c^{-E_v^*}(Z))\right) = T_{a_{v, i} \cdot z_{v, i}}\left(  a_{v, i} \cdot \im(c^{-E_v^*}(Z))\right)$ and $T_{ t_{v, i}}\left( \im(c^{-E_v^*}(Z)) \right)  = T_{- b_{v, i} \cdot t_{v, i}}\left(  -b_{v, i} \cdot  \im(c^{-E_v^*}(Z)) \right)$ we indeed get $\dim(M) = \dim(\im(c^{l'_1}(Z)))$.

It means, that we have $\dim(\im(c^{l'_1}(Z))) = \dim(M)  \leq \dim(\im(c^{l'}(Z)))$.

Notice, that since $a_{v, i} \leq 1$ for all $v \in I, 1 \leq i \leq r_v$ and $b_{v, i} \leq 1$ for all $v \in I, 1 \leq i \leq n_v$ we have $(l'_1, E_v) \geq (l', E_v)$ for every vertex $v \in |Z|$.

Let's recall also, that there is a vertex $u \in I$, such that $\dim(V_u(Z) \oplus \im(c^{l'}(Z))) > \dim(\im(c^{l'}(Z))$ and we have $n_u > 0$ or there is an index $1 \leq i \leq r_u$, such
that $a_{u, i} \notin \bZ$.

Notice, that we have $(l', E_u)< r_u + n_u = (l'_1, E_u)$, so $\dim(\im(c^{l'_1}(Z))) \geq \dim(\im(c^{-E_{u}^*}(Z)) \oplus \im(c^{l'}(Z)))$.

We know, that $\dim(V_u(Z) \oplus \im(c^{l'}(Z))) > \dim(\im(c^{l'}(Z)))$ from which it follows, that $\dim(\im(c^{-E_{u}^*}(Z)) \oplus \im(c^{l'}(Z))) > \dim(\im(c^{l'}(Z))$.

It yields $\dim(M) > \dim(\im(c^{l'}(Z)))$, which is a contradicition and it proves part 3) of our main lemma completely.

\end{proof}

Now we are ready to prove our main theroem of this article:

\begin{theorem}
Let $\mathcal{T}$ be an arbitrary resolution graph, and let's have a generic singularity and resolution $\tX$ corresponging to the resolution graph $\mathcal{T}$.
Let's have furthermore an effective integer cycle $Z \in L$, and an arbitrary Chern class $Z' \in L'$, 
then the cohomology number $h^1(\calO_Z(Z'))$ can be computed from the resolution graph and $Z, Z'$ combinatorially.
\end{theorem}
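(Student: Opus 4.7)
The plan is to reduce the computation of $h^1(\calO_Z(Z'))$, via a combination of blow-ups, Theorem \ref{relgen}, and Serre duality, to configurations handled by the preceding key lemma or by Theorem \ref{th:CLB1}(II) of \cite{NNA2}. We proceed by induction on the size of $Z$ under the partial order of effective cycles.

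Given $Z$ and $Z'$, decompose $Z'=-\sum_v a_v E_v^*$ with $a_v=(Z',E_v)$ and partition $\calv=\calv_1\cup\calv_2$ with $\calv_2:=\{v:a_v>0\}$, so that the hypothesis of Theorem \ref{relgen}(2) is satisfied on $\calv_2\cap|Z|$. Writing $Z=Z_1+Z_2$ accordingly and setting $\mfl:=\calO_{Z_1}(Z')$, Theorem \ref{relgen}(2) yields
\begin{equation*}
h^1(\calO_Z(Z'))=\chi(-Z')-\min_{0\leq l\leq Z}\bigl(\chi(-Z'+l)-h^1\bigl((Z-l)_1,(\mfl|(Z-l)_1)\otimes\calO_{(Z-l)_1}(-l)\bigr)\bigr),
\end{equation*}
which reduces the problem to $h^1$-values of natural line bundles on strictly smaller cycles on the generic subsingularity $\tX_1$. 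By induction these are combinatorial.

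The base case is $\calv_2\cap|Z|=\emptyset$. Here the relative reduction gives no information, so we first blow up $\tX$ at enough generic smooth points of the exceptional divisor to arrange multiplicity one along the support of the pulled-back Chern class. Using the rational line bundle framework and the fact that on a generic singularity $\calO_{\tX'}(-E_w^*)$ is realized by a generic smooth point $p_w\in E_w$, the natural line bundle $\calO_Z(Z')$ identifies with $\calO_{Z_1}(\sum_w m_w p_w)$ as in the preceding key lemma. The key lemma then either computes $h^1$ directly in case (2), yielding $h^1(\calO_{Z_1})-d_{Z_1,l'}$, or provides the non-dominance information in case (3); in the latter, Serre duality $h^1(\calO_{Z_1}(\pi^*Z'))=h^0(\calO_{Z_1}(K+Z_1-\pi^*Z'))$ reduces to an auxiliary computation handled by the same induction.

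The main obstacle is verifying that the induction is well-founded: each application of Theorem \ref{relgen}(2) produces strictly smaller cycles, and in case (3) of the key lemma the cohomological cycle of $\calO_{Z_1}(\pi^*Z')$, which is strictly smaller than $Z_1$, is itself combinatorially identifiable from $\mathcal{T}$ and $Z'$ for a generic singularity. Together with the combinatorial computability of $h^1(\calO_Z)$ and the Abel-map dimensions $d_{Z,l'}$ for a generic singularity (by \cite{NNAD}), this guarantees termination and produces a combinatorial algorithm for $h^1(\calO_Z(Z'))$.
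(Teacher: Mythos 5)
Your proposal assembles the right tools (Theorem \ref{relgen}, the key lemma on divisors at generic points, blow-ups, Serre duality), but the inductive skeleton does not hold together, for two concrete reasons.

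First, the hypothesis of Theorem \ref{relgen}(2) is stated in terms of the $E_v$-coordinates of the Chern class: one needs $l'=-\sum_v a_v E_v$ with $a_v>0$ for all $v\in\calv_2\cap|Z|$ (the relative analogue of the condition $l'_v<0$ of Theorem \ref{th:CLB1}). You partition $\calv$ according to the sign of $(Z',E_v)$, i.e.\ according to the $E_v^*$-coordinates, which is a different condition: for instance for $Z'=E_w$ your $\calv_2$ consists of the neighbours of $w$, where the $E_v$-coordinate of $Z'$ is $0$, so the hypothesis of Theorem \ref{relgen}(2) fails and the displayed formula is simply not available. Even with the corrected choice $\calv_2\cap|Z|=\{v\in|Z|:\ Z'_v<0\}$, the recursion strictly decreases $Z$ only while this set is nonempty, so all the difficulty is pushed into your ``base case'', which is where essentially the whole content of the theorem lives.

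Second, the induction on the size of $Z$ is not well-founded in that base case. Blowing up to reach the multiplicity-one situation replaces $Z$ by a larger cycle on the blown-up resolution, and the Serre-duality step in your case (3) produces an $h^0$ of another natural line bundle on the same cycle, so neither move decreases your induction parameter. Moreover the identification of $\calO_Z(Z')$ with $\calO_Z(\sum_w m_w p_w)$ at generic points only applies when $|Z|\cap|Z'|=\emptyset$ and the contact multiplicities along $|Z|$ are one; reducing the general situation to that one is precisely the content of Steps 2--5 of the paper's argument and cannot be waved through. The paper instead inducts on $h^1(\calO_Z)$ (combinatorially computable for generic $\tX$), uses $h^0(\calO_Z(Z'))=\max_{0\le l\le Z}h^0(\calO_{Z-l}(Z'-l))$ together with the fact that the maximum is attained at some $l$ with $H^0(\calO_{Z-l}(Z'-l))_{reg}\ne\emptyset$ (or $l=Z$), and then applies Theorem \ref{relgen} to pass to a cycle $Z_l$ with $h^1(\calO_{Z_l})<h^1(\calO_Z)$; the terminal configurations where $h^1$ cannot be decreased are exactly those treated by the key lemma, with a separate secondary induction on $\sum_{v}(Z',E_v)\cdot t_v$ controlling the blow-ups. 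You would need to replace your induction parameter by something of this kind for the argument to terminate.
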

\begin{proof}

We will prove the theorem by induction on $h^1(\calO_Z)$ (which is a combinatorially determined number by the main theorem of \cite{NNA2}, namely $h^1(\calO_Z) = \chi(E_{|Z|})  - \min_{E_{|Z|} \leq l \leq Z} \chi(l)$).

Of course, if $h^1(\calO_Z) = 0$, then a line bundle on $Z$ is determined by its Chern class, and in this case $h^1(\calO_Z(Z')) = \chi(Z') - \min_{0 \leq l \leq Z} \chi(Z' + l)$,
which is equal to the $h^1$ of the generic line bundle in $\pic^{c_1(\calO_Z(Z'))}(Z)$.

Now assume, that the statement is proven for all cases $h^1(\calO_Z) \leq r-1$ and let's have $h^1(\calO_Z) = r$, we are proving the induction step in a several number of steps.

Step 1)  Assume first, that, $h^1(\calO_Z) = r$, and $|Z| \cap |Z'|= 0$. Furthermore assume, that for every $v \in |Z|$, such that, there exists a vertex $w \in |Z'|$, for which $(v, w)$ is an edge one has $Z_v = 1$.
Assume furthermore, that there isn't a vertex $v \in |Z|$, such that $h^1(Z) = h^1(\calO_{Z - Z_v \cdot E_v})$. With these properties we claim, that $h^1(\calO_Z(Z'))$ is combinatorially computable:

If $|Z|$ is nonconnencted and the connected components of $|Z|$ are $|Z_1|, \cdots |Z_i|$, then if $h^1(\calO_{Z_j}) < h^1(\calO_Z)$ for all $1 \leq j \leq i$, then the statement follows from our induction hypothesis, indeed we have $h^1(\calO_Z(Z')) = \sum_{1 \leq j \leq i} h^1(\calO_{Z_j}(Z'))$.

If $h^1(\calO_{Z_1}) = h^1(\calO_Z)$, and $h^1(\calO_{Z_j}) = 0$ for all $2 \leq j \leq i$, then we have:
\begin{equation*}
h^1(\calO_Z(Z')) = \sum_{2 \leq j \leq i}  \left( \chi(Z') - \min_{0 \leq l \leq Z_j} \chi(Z' + l) \right)  + h^1(\calO_{Z_1}(Z')).
\end{equation*}

So it means, that we just have to prove claim 1) in the case, when $|Z|$ is connected.

It means, that we are in the situation of the previous lemma, because the line bundle $\calO_Z(Z')$ on $Z$ can be written by $\calO_Z(D)$, where $D = \sum_{v \in I, 1 \leq i \leq r_v} a_{v, i} p_{v, i} -   \sum_{v \in I, 1 \leq i \leq n_v} b_{v, i} q_{v, i}$
for some rational numbers $a_{v, i}, b_{v, i}$ and generic points $p_{v, i}, q_{v, i} \in E_v , v\in I$, where $I$ is the set of vertices in $|Z|$ wich has got a neighbour in $|Z'|$.

Let's have the subset $I' \subset I$ consisting of vertices $v \in I$, such that $ \dim(V_v(Z) \oplus (\im(c^{l'}(Z))) = \dim(\im(c^{l'}(Z)))$ or we have $n_v = 0$ and $a_{v, i} \in \bZ$ for all $1 \leq i \leq r_v$.

Now we know from part 1) of our main lemma, that if $l' = c_1(\calO_Z(D)) \in  -S'$ and $I = I'$, then $\calO_Z(Z') \in \overline{ \im(c^{l'}(Z))}$ and $h^1(\calO_Z(Z')) = h^1(Z) - d_{Z, l'}$.

Notice, that the dimensions $ d_{Z, l'}$ are combinatorially computable from the resolution graph in the generic case by \cite{NNAD}, so in this case we are done.

On the other hand, if $l' \notin  -S'$ or $I \neq I'$, then we know from part 2) of our main lemma, that $\calO_Z(Z') \notin  \im(c^{l'}(Z))$, which means, that $h^0(\calO_Z(Z')) = \max_{0 <l \leq Z} h^0(\calO_{Z-l}(Z'-l))$, and furthermore, 
there is a cycle $0 < l \leq Z$, such that $h^0(\calO_Z(Z')) = h^0(\calO_{Z-l}(Z'-l))$ and $H^0(\calO_{Z-l}(Z'-l))_{reg} \neq \emptyset$ or $l= Z$.

For each  $0 < l \leq Z$ let's choose a vertex $ v_l \in |l|$, and let's denote $(Z-l)_{|Z-l| \setminus v_l}$ by $Z_l$, notice that $h^1(Z_l) \leq r-1$ surely by our assumption in case 1).

If $H^0(\calO_{Z-l}(Z'-l))_{reg} \neq \emptyset$ for some $0 < l < Z$, then using Theorem \ref{relgen} in the case $\calv_1 = |Z_l|$ , $\calv_2 = v_l$, $Z_1 = Z_l$ and $Z_2 = Z-l - Z_l$, we get $h^1(\calO_{Z-l}(Z'-l)) = h^1(\calO_{Z_l}(Z'-l))$.

We have obviously $h^1(\calO_{Z-l}(Z'-l)) = h^1(\calO_{Z_l}(Z'-l))$ also in the case $l = Z$.

On the other hand, if $H^0(\calO_{Z-l}(Z'-l))_{reg} = \emptyset$ for some $0 < l < Z$, we still have $h^1(\calO_{Z-l}(Z'-l)) \geq h^1(\calO_{Z_l}(Z'-l))$.

Notice, that by induction all $h^1(\calO_{Z_l}(Z'-l))$ is combinatorially computable and $h^0(\calO_Z(Z')) = \max_{0 <l  \leq Z} (h^1(\calO_{Z_l}(Z'-l)) + \chi(\calO_{Z-l}(Z'-l)))$ which proves claim 1).

Step 2) Assume, that, $h^1(\calO_Z) = r$, and $|Z| \cap |Z'|= 0$. Furthermore assume, that if we denote the set of vertices $v \in |Z|$ by $I$, for which, there exists a vertex $w \in |Z'|$ such that $(v, w)$ is an edge, then one has $ h^1(\calO_{Z_{|Z| \setminus I} + E_I}) = r$.  Assume furthermore, that there isn't a vertex
$v \in |Z|$, such that $h^1(\calO_Z) = h^1(\calO_{Z - Z_v \cdot E_v})$. With these properties $h^1(\calO_Z(Z'))$ is combinatorially computable:

We can again assume $|Z|$ is connected.

Notice that by Step 1), $h^1(\calO_{Z_{|Z| \setminus I} + E_I}(Z'))$ is combinatorially computable from the resolution graph.

Notice first, that $h^1(\calO_Z(Z')) \geq h^1(\calO_{Z_{|Z| \setminus I} + E_I}(Z'))$ and if $H^0(\calO_Z(Z'))_{reg} \neq \emptyset$, then equality happens.

Indeed from \cite{NNA1} we know, that if $B \leq A$ are two integer effective cycles on a surface singularity, such that $h^1(\calO_B) = h^1(\calO_A)$ and $\calL \in \pic^{l'}(A)$
is a line bundle, such that $\calL \in \im(c^{l'}(A))$, then we have $h^1(A, \calL) = h^1(B, \calL | B)$.

On the other hand if $H^0(\calO_Z(Z'))_{reg} = \emptyset$, then $h^0(\calO_Z(Z')) = \max_{0 <l \leq Z} h^0(\calO_{Z-l}(Z'-l))$, and furthermore, there is a cycle $0 < l \leq Z$, such that $h^0(\calO_Z(Z')) = h^0(\calO_{Z-l}(Z'-l))$ and $H^0(\calO_{Z-l}(Z'-l))_{reg}$ is nonempty or $l = Z$.

Obviously $h^0(\calO_Z(Z')) \geq \max_{0 <l \leq Z} h^0(\calO_{Z-l}(Z'-l))$ always happens.

Now for each  $0 < l \leq Z$, let's choose a vertex $ v_l \in |l|$, and let's denote $(Z-l)_{|Z-l| \setminus v_l}$ by $Z_l$.

If $H^0(\calO_{Z-l}(Z'-l))_{reg} \neq \emptyset$ for some $0 < l \leq Z$ or $l = Z$, then using Theorem \ref{relgen} in the case $\calv_1 = |Z_l|$ , $\calv_2 = v_l$, $Z_1 = Z_l$ and $Z_2 = Z-l - Z_l$, we get $h^1(\calO_{Z-l}(Z'-l)) = h^1(\calO_{Z_l}(Z'-l))$.

On the other hand if $H^0(\calO_{Z-l}(Z'-l))_{reg} = \emptyset$ we also have $h^1(\calO_{Z-l}(Z'-l)) \geq h^1(\calO_{Z_l}(Z'-l))$.

This means, that $h^0(\calO_Z(Z')) = \max(\chi(\calO_Z(Z')) +  h^1(\calO_{Z_{|Z| \setminus I} + E_I}(Z')), \max_{0 < l \leq Z }h^1(\calO_{Z_l}(Z'-l)) + \chi(\calO_{Z-l}(Z'-l)))$ and we have proved claim 2) with it.

Step 3)  Assume, that, $h^1(Z) = r$, and $|Z| \cap |Z'|= 0$. Assume furthermore, that there isn't a vertex $v \in |Z|$, such that $h^1(Z) = h^1(Z - Z_v \cdot E_v)$. With these properties $h^1(\calO_Z(Z'))$ is combinatorially computable:

Similarly as in the previous cases we can assume $|Z|$ is connected.

Let's denote the set of vertices $v \in |Z|$, for which, there exists a vertex $w \in |Z'|$ such that $(v, w)$ is an edge by $I$.

For a vertex $v \in I$ let's blow up $E_v$ sequentially in generic points, let the new exceptional divisors be $E_{v, 1}$, $E_{v,2}, \cdots E_{v, i}$, and let's have the cycle $Z_{v, i} = Z + \sum_{1 \leq j \leq i} Z_v E_{v, j}$ on the $i$-th blowup, and let $t_v$
be the minimal number, such that $h^1(\calO_{Z_{v, t_v}}) = h^1(\calO_Z) = h^1(\calO_{Z_{v, t_v} - Z_v \cdot E_{v, t_v}})$, we know from our conditions that $t_v \geq 1$.

We prove the statement by induction on the value of $\left( \sum_{v \in I}(Z', E_v) \cdot t_v \right)$.

If we have $t_v = 1$ for all $v \in I$, then we have $ h^1(\calO_{Z_{|Z| \setminus I} + E_I}) = r$.

Indeed, by our assumption every differential form in $\frac{H^0(\calO_{\tX}(K + Z))}{H^0(\calO_{\tX}(K))}$ must have a pole of order at most $1$ along the exceptional divisors $E_v, v \in I$.
The reason of it is because if $\omega \in \frac{H^0(\calO_{\tX}(K + Z))}{H^0(\calO_{\tX}(K))}$ has got a pole along an exceptional divisor $E_v, v \in I$ of order greater than $1$, then if we blow up $E_v$ at a generic point, then $\omega$ also has got a pole along the new exceptional divisor $E_{v_1}$, which means by \cite{NNA1}, that $h^1(\calO_{Z_{v, 1}}) = h^1(\calO_Z) > h^1(\calO_{Z_{v, 1} - Z_v \cdot E_{v, 1}})$, however this is a contradiction.

This means, that $h^1(\calO_{Z_{|Z| \setminus I} + E_I}) = h^1(\calO_Z) = r$.

In this case the statement follows from our previous case.

Now assume, that $\sum_{v \in I} (Z', E_v) \cdot t_v = t$ and we know the statement for $\sum_{v \in I} (Z', E_v) \cdot t_v < t$, and furthermore assume, that $t_v > 1$ for some vertex $v \in I$.

We know, that there is a vertex $w \in |Z'|$, such that $(v, w)$ is an edge, let's have the intersection point $E_v \cap E_w = p$.

Let's blow up $p$, and let's denote the blow down map by $\pi$.

We get a generic resolution of the new graph, let's denote the new exceptional divisor by $E_{v_1}$, and let's have $Z_{v, new} = Z + Z_v \cdot E_{v_1}$.

From the condition on $v$ and from the condition of our statement we know, that there isn't a vertex $u \in |Z_{v, new}|$, such that $h^1(\calO_{Z_{v, new}}) = h^1(\calO_{Z_{v, new} - Z_u \cdot E_u})$.

Indeed for $u \neq v_1$ this follows from the conditions of our statement, and for $u = v_1$ this follows, from $t_v > 1$.

We know, that $h^1(\calO_Z(Z')) = h^1(\calO_{Z_{v, new}}(\pi^*(Z')))$, so it is enough to compute $ h^1(\calO_{Z_{v, new}}(\pi^*(Z')))$.

We know, that $ h^0(\calO_{Z_{v, new}}(\pi^*(Z'))) =  \max_{0 \leq l \leq Z_{v, new}} h^0(\calO_{Z_{v, new} -l}(\pi^*(Z')-l))$, and furthermore, 
there is a cycle $0 \leq l \leq Z_{v, new}$, such that $h^0(\calO_{Z_{v, new}}(\pi^*(Z'))) = h^0(\calO_{Z_{v, new} -l}(\pi^*(Z')-l))$ and $H^0(\calO_{Z_{v, new} -l}(\pi^*(Z')-l))_{reg} \neq \emptyset$ or $l = Z_{v, new}$.

Now if $l \neq Z'_w \cdot E_{v,1}$, then there is a vertex $v_l \in |Z_{v, new}| \cap |\pi^*(Z')-l|$, namely $v_l = v_1$, and let's denote $(Z_{v, new}-l)_{|Z_{v, new} -l| \setminus v_l}$ by $Z_l$, if $l = Z_{v, new}$, then let's denote $Z_l = 0$, notice that in any case we have $h^1(Z_l) \leq r-1$.

If $H^0(\calO_{Z_{v, new} -l}(\pi^*(Z')-l))_{reg} \neq \emptyset$ for some $0 \leq l \leq Z_{v, new}$ and $l \neq Z'_w \cdot E_{v,1}$, then using Theorem \ref{relgen} in the case $\calv_1 = |Z_l|$ , $\calv_2 = v_l$, $Z_1 = Z_l$ and $Z_2 = Z_{v, new} -l - Z_l$, we get, that $h^1(\calO_{Z_{v, new} -l}(\pi^*(Z') -l)) = h^1(\calO_{Z_l}(\pi^*(Z') -l))$.

Let's denote $N =h^0(\calO_{Z_{v, new} - Z'_w \cdot E_{v,1}}(\pi^*(Z') - Z'_w \cdot E_{v,1})$, this means that:
\begin{equation*}
h^0(\calO_{Z_{v, new}}(\pi^*(Z'))) = \max \left( \max_{0 \leq l \leq Z_{v, new}, l \neq Z'_w \cdot E_{v,1}} \left( h^1(\calO_{Z_l}(\pi^*(Z')-l)) + \chi(\calO_{Z_{v, new} -l}(\pi^*(Z') -l))\right), N \right)
\end{equation*}

The term $\max_{0 \leq l \leq Z_{v, new}, l \neq Z'_w \cdot E_{v,1}} \left( h^1(\calO_{Z_l}(\pi^*(Z')-l)) + \chi(\calO_{Z_{v, new} -l}(\pi^*(Z') -l))\right)$ is computable because of $h^1(\calO_{Z_l}) \leq r-1$ for every $0 \leq l \leq Z_{v, new}, l \neq Z'_w \cdot E_{v,1}$, so we only need to show that $N = h^0(\calO_{Z_{v, new} - Z'_w \cdot E_{v,1}}(\pi^*(Z') - Z'_w \cdot E_{v,1})$ is also computable.

However notice, that $h^0(\calO_{Z_{v, new} - Z'_w \cdot E_{v,1}}(\pi^*(Z') - Z'_w \cdot E_{v,1})$ satisfies the conditions of our claim, and $\sum_{v \in I} (Z', E_v) \cdot t_v$ decreased.

Indeed the vertex $w$ is now the neighbour of the vertex $v_1$ instead of vertex $v$, and we have to blow up the exceptional divisor $E_{v_1}$ only $t_{v}-1$ times.

It means, that we are done by the induction hypothesis with claim 3).

Step 4)  Assume, that $h^1(\calO_Z) = r$, and furthermore, that there isn't a vertex $v \in |Z|$, such that $h^1(\calO_Z) = h^1(\calO_{Z - Z_v \cdot E_v})$.
With these properties $h^1(\calO_Z(Z'))$ is combinatorially computable:

We can again assume, $|Z|$ is connected.

We know, that $h^0(\calO_Z(Z')) = \max_{0 \leq l \leq Z} h^0(\calO_{Z -l}(Z'-l))$, and furthermore, 
there is an integer cycle $0 \leq l \leq Z$, such that $h^0(\calO_{Z}(Z')) = h^0(\calO_{Z-l}(Z'-l))$ and $H^0(\calO_{Z-l}(Z'-l))_{reg} \neq \emptyset$ or $l = Z$.

Now let's denote $I_l = |Z-l| \cap |Z'-l|$ and let's denote $Z_l = (Z-l)_{|Z-l| \setminus I_l}$.
If $H^0(\calO_{Z-l}(Z'-l))_{reg}$ is nonempty, then using Theorem \ref{relgen} in the case $\calv_1 = |Z_l|$ , $\calv_2 = I_l$, $Z_1 = Z_l$ and $Z_2 = Z-l - Z_l$, we get $h^0(\calO_{Z-l}(Z'-l)) = \chi(\calO_{Z-l}(Z'-l)) + h^1(\calO_{Z_l}(Z'-l))$ and $ h^0(\calO_{Z-l}(Z'-l)) \geq \chi(\calO_{Z-l}(Z'-l)) + h^1(\calO_{Z_l}(Z'-l))$ is always true.

This means $ h^0(\calO_Z(Z')) = \max_{0 \leq l \leq Z} (\chi(\calO_{Z-l}(Z'-l)) + h^1(\calO_{Z_l}(Z'-l)))$, and notice, that every term is computable on the right hand side, by previous cases, and by induction hypothesis on $h^1(\calO_Z)$.

Indeed if $I_l \neq \emptyset$, this is trivial because of $h^1(\calO_{Z_l}) < r$ and if $I_l = \emptyset$, then it follows from step 3).

This proves claim 4).

5) Assume, that $h^1(Z) = r$, then $h^1(\calO_Z(Z'))$ is combinatorially computable:

We can again assume, that $|Z|$ is connected.

We know, that $h^0(\calO_Z(Z')) = \max_{0 \leq l \leq Z} h^0(\calO_{Z -l}(Z'-l))$, and furthermore, 
there is an integer cycle $0 \leq l \leq Z$, such that $h^0(\calO_{Z}(Z')) = h^0(\calO_{Z-l}(Z'-l))$ and $H^0(\calO_{Z-l}(Z'-l))_{reg} \neq \emptyset $ or $l = Z$.

For all $0 \leq l \leq Z$, let's denote $I_l$ the smallest subset of $|Z-l|$, such that $h^1(Z-l) = h^1((Z-l)_{I_l})$, and let's denote $Z_l = (Z-l)_{I_l}$.
Now $h^0(\calO_{Z-l}(Z'-l)) \geq \chi(\calO_{Z-l}(Z'-l)) + h^1(\calO_{Z_l}(Z'-l))$, and  if $H^0(\calO_{Z-l}(Z'-l))_{reg}$ is nonempty or $l=Z$, then equality happens.

This means that $h^0(\calO_Z(Z')) = \max_{0 \leq l \leq Z} (\chi(\calO_{Z-l}(Z'-l)) + h^1(\calO_{Z_l}(Z'-l)))$ and the right hand side is computable by previous cases.

This proves our theorem completely.

\end{proof}

\end{document}